\numberwithin{equation}{section}
\newcommand*{\LatexDef}{.}
\DeclareSymbolFont{defaultmathcal}{OMS}{zplm}{m}{n}
\DeclareSymbolFontAlphabet{\mathcal}{defaultmathcal}
\DeclareSymbolFont{handwritten}{OMS}{rsfs}{m}{n}
\DeclareSymbolFontAlphabet{\handcal}{handwritten}
\newcommand{\N}{\mathbb{N}}
\newcommand{\Z}{\mathbb{Z}}
\newcommand{\R}{\mathbb{R}}
\newcommand{\Hil}{\mathcal{H}}
\newcommand{\0}{\mathbb{\emptyset}}
\newcommand{\w}{\infty}
\newcommand\al{\alpha}
\newcommand\be{\beta}
\newcommand\de{\delta}
\newcommand{\e}{\varepsilon}
\newcommand\si{\sigma}
\newcommand{\De}{\Delta}
\newcommand{\AC}{\mathcal{A}} 
\newcommand{\BC}{\mathcal{B}}
\newcommand{\CC}{\mathcal{C}}
\newcommand{\FC}{\mathcal{F}}
\newcommand{\LC}{\mathcal{L}}
\newcommand{\PC}{\mathcal{P}}
\newcommand{\SC}{\mathcal{S}}
\newcommand{\Pow}{\handcal{P}}
\newcommand{\Fin}[1]{{#1^{<\N}}}
\newcommand{\St}{\mathrm{Stab}}
\newcommand{\mmod}[2][]{\mathrm{{#1}mod}\;#2}
\newcommand{\del}{\partial}
\newcommand{\cl}{\overline}
\newcommand{\simdiff}{\triangle}
\newcommand{\actson}{\curvearrowright}
\newcommand{\ractson}{\curvearrowleft}
\newcommand{\conc}{{^\frown}}
\newcommand{\imp}{\Rightarrow}
\newcommand{\rimp}{\Leftarrow}
\newcommand{\shortiff}{\Leftrightarrow}
\newcommand{\Folner}{F{\o}lner\xspace}
\newcommand{\Frechet}{Fr\'{e}chet\xspace}
\newcommand{\Slawek}{S\l{}awek\xspace}
\newcommand{\Szemeredi}{Szemer\'{e}di\xspace}
\newcommand{\Todorcevic}{Todor\v{c}evi\'{c}\xspace}
\newcommand{\straighttext}[1]{\text{\textnormal{#1}}}
\newcommand{\set}[1]{\left\{ #1 \right\}}
\newcommand{\gen}[1]{\langle #1 \rangle}
\newcommand{\rest}[1]{\mathord{\downharpoonright_{#1}}}
\newcommand{\defbycases}[4][\text{otherwise}]{\left\{\begin{array}{ll}
#2 & \text{if } #3 \\
#4 & #1
\end{array}\right.}
\theoremstyle{plain}
\newtheorem{theorem}[equation]{Theorem}
\newtheorem*{theorem*}{Theorem}
\newcommand{\namedthmlabelref}[1]{\ref{#1}\xspace}
\newcommand{\namedthmref}[2]{#1 \namedthmlabelref{#2}}
\newenvironment{namedthm}[2][]{\refstepcounter{equation}\par\medskip\noindent \textbf{#2~\theequation}#1\textbf{.}\itshape\xspace}{\smallskip}
\newenvironment{namedthm*}[2][]{\par\medskip\noindent \textbf{#2}#1\textbf{.}\itshape\xspace}{\smallskip}
\crefname{prop}{Proposition}{Propositions}
\newtheorem{prop}[equation]{Proposition}
\newtheorem*{propo*}{Proposition}
\crefname{property}{Property}{Properties}
\newtheorem*{property*}{Property}
\newtheorem{lemma}[equation]{Lemma}
\crefname{cor}{Corollary}{Corollaries}
\newtheorem{cor}[equation]{Corollary}
\newtheorem*{cor*}{Corollary}
\crefname{obs}{Observation}{Observations}
\newtheorem{obs*}{Observation}
\crefname{fact}{Fact}{Facts}
\newtheorem*{fact*}{Fact}
\theoremstyle{definition}
\crefname{defn}{Definition}{Definitions}
\newtheorem{defn}[equation]{Definition}
\newtheorem*{defn*}{Definition}
\crefname{question}{Question}{Questions}
\newtheorem*{question*}{Question}
\crefname{conj}{Conjecture}{Conjectures}
\newtheorem*{conj*}{Conjecture}
\crefname{example}{Example}{Examples}
\newtheorem{example}[equation]{Example}
\newtheorem*{example*}{Example}
\theoremstyle{remark}
\crefname{remark}{Remark}{Remarks}
\newtheorem{remark}[equation]{Remark}
\newtheorem*{remark*}{Remark}
\newenvironment{remarklike*}[2][]{\par\medskip\noindent \textbf{#2}#1\textbf{.}\rmfamily\xspace}{\medskip}
\newtheorem*{claim*}{Claim}
\newenvironment{case*}[1]{\smallskip\par\noindent \textit{Case~#1}: \rmfamily}{\smallskip}
\crefname{notation}{Notation}{Notations}
\newtheorem{notation}[equation]{Notation}
\newtheorem*{notation*}{Notation}
\newtheorem{terminology}[equation]{Terminology}
\newtheorem*{terminology*}{Terminology}
\crefname{convention}{Convention}{Conventions}
\newtheorem{convention}[equation]{Convention}
\newtheorem*{convention*}{Convention}
\crefname{spec}{Speculation}{Speculations}
\newtheorem*{spec*}{Speculation}
\crefname{caution}{Caution}{Cautions}
\newtheorem*{caution*}{Caution}
\newcommand{\fntsz}[1][11]{\fontsize{#1}{#1}\selectfont}
\newenvironment{acknowledgements}[1][11]{\medskip \fntsz[#1]\begin{trivlist}
		\item[\hskip \labelsep {\textit{Acknowledgements}.}]}{\end{trivlist}\smallskip}
\newcommand{\iformat}[1][\textup{(\alph{enumi})}]{
	\renewcommand{\theenumi}{\textup{#1}}
	\renewcommand{\labelenumi}{\theenumi}
}
\newenvironment{goodenum}[1][(\alph{enumi})]
	{\begin{enumerate}\iformat[#1]}
	{\end{enumerate}}
\newcommand{\exampleslabelref}[2]{\ref{#1}\labelcref{#2}\xspace}
\newcommand{\examplesref}[2]{Example \exampleslabelref{#1}{#2}}
\crefname{examples}{Examples}{Examples}
\newenvironment{examples}[1][a]{\refstepcounter{equation}\par\medskip\noindent \textbf{Examples~\theequation.} 
\vspace{.6em}
\begin{enumerate}[\bfseries (#1)]
\setlength{\itemsep}{.8em}
\rmfamily}{\end{enumerate}\smallskip}
\newenvironment{examples*}[1][a]{\refstepcounter{equation}\par\medskip\noindent \textbf{Examples.}
\vspace{.6em}
\begin{enumerate}[\bfseries (\theequation.#1)]
\setlength{\itemsep}{.8em}
\rmfamily}{\end{enumerate}\smallskip}
\newcommand{\innerenumsep}{5pt}
\crefname{subsection}{Subsection}{Subsections}
\theoremstyle{plain}
\definecolor{gris}{RGB}{90,90,90}
\let\stdsubsubsection\subsubsection
\renewcommand*\subsubsection{\addtocounter{equation}{1}\stdsubsubsection}
\let\stdpart\part
\renewcommand*\part{%
	\@ifstar{\starpart}{\@dblarg\nostarpart}}
\newcommand*\starpart[1]{\clearpage {\Large \stdpart*{#1}} \bigskip}
\def\nostarpart[#1]#2{\clearpage {\Large \stdpart[{#1}]{#2}} \vspace{1.6cm}}
\let\oldtocpart=\tocpart
\let\oldtocsection=\tocsection
\let\oldtocsubsection=\tocsubsection
\renewcommand{\tocpart}[2]{\hspace{0em} \oldtocpart{#1}{#2}}
\renewcommand{\tocsection}[2]{\hspace{0em} \sc \small \oldtocsection{#1}{#2}}
\renewcommand{\tocsubsection}[2]{\hspace{3em} \oldtocsubsection{#1}{#2}}
\newcommand{\instanceref}[2]{Example \exampleslabelref{#1}{#2}}
\crefname{instances}{Instances}{Instances}
\newenvironment{instances}[1][a]{\refstepcounter{equation}\par\medskip\noindent \textbf{Instances~\theequation.} 
	\vspace{.6em}
	\begin{enumerate}[\bfseries (#1)]
		\setlength{\itemsep}{.8em}
		\rmfamily}{\end{enumerate}\smallskip}
\title[]{A Ramsey theorem on semigroups and a general van der Corput lemma}
\author[]{Anush Tserunyan}
\address{Department of Mathematics, University of Illinois at Urbana-Champaign, IL, 61801, USA}
\email{anush@illinois.edu}
\date{}
\newcommand{\T}[2][\LC,\LC']{T(#2 / #1)}
\newcommand{\FP}{\text{\textnormal{FP}}}
\newcommand{\IP}{\text{\textnormal{IP}}\xspace}
\newcommand{\IPstar}{{\text{\textnormal{IP}}^\ast}}
\newcommand{\Fh}{\FC_h}
\newcommand{\m}{\bar{m}}
\newcommand{\n}{\bar{n}}
\newcommand{\dualF}{\FC^0}
\newcommand{\simF}{\sim_{\FC}}
\newcommand{\tilA}[1][A]{\tilde{#1}}
\newcommand{\ext}{\text{\textnormal{ext}}}
\newcommand{\depth}{\text{\textnormal{depth}}}
\newcommand{\Depth}{\text{\textnormal{Depth}}}
\newcommand{\updens}{{\overline{d}}}
\newcommand{\asubset}[1][\FC]{\subseteq_{#1}}
\newcommand{\asupset}[1][\FC]{\supseteq_{#1}}
\newcommand{\thickL}[2][\LC]{$\del(#2 / #1)$-thick}
\newcommand{\DthickL}[2][\LC]{$\De(#2 / #1)$-thick}
\newcommand{\thick}[2][\FC]{$\del_{#1}(#2)$-thick}
\newcommand{\Dthick}[2][\FC]{$\De_{#1}(#2)$-thick}
\newcommand{\pos}[1][\FC]{{{#1}^\plus}}
\newcommand{\Pos}[1][\FC]{\displaystyle \pos[#1]}
\newcommand{\D}{\straighttext{D}\xspace}
\newcommand{\M}{\straighttext{M}\xspace}
\newcommand{\LL}{\straighttext{L}\xspace}
\newcommand{\MeasF}[1][\FC]{\PC_{#1}}
\begin{document}

\begin{abstract}	
	A major theme in arithmetic combinatorics is proving multiple recurrence results on semigroups (such as \Szemeredi's theorem) and this can often be done using methods of ergodic Ramsey theory. What usually lies at the heart of such proofs is that, for actions of semigroups, a certain kind of one recurrence (mixing along a filter) amplifies itself to multiple recurrence. This amplification is proved using a so-called van der Corput difference lemma for a suitable filter on the semigroup. Particular instances of this lemma (for concrete filters) have been proven before (by Furstenberg, Bergelson--McCutcheon, and others), with a somewhat different proof in each case. We define a notion of differentiation for subsets of semigroups and isolate the class of filters that respect this notion. The filters in this class (call them $\del$-filters) include all those for which the van der Corput lemma was known, and our main result is a van der Corput lemma for $\del$-filters, which thus generalizes all its previous instances. This is done via proving a Ramsey theorem for graphs on the semigroup with edges between the semigroup elements labeled by their ratios.
\end{abstract}

\maketitle

\tableofcontents

\section{Introduction}

The current paper concerns a generalization of certain types of lemmas, known as van der Corput difference lemmas\footnote{The name comes from the well-known van der Corput difference theorem proved by Johannes van der Corput in \cite{vdC}.}, that are used in proving multiple recurrence results in ergodic Ramsey theory. In this section, we describe the general context in which these lemmas are applied, using the famous Furstenberg Multiple Recurrence theorem as a motivating example. Furthermore, we state our generalization of these lemmas, and conclude this section with a discussion of a Ramsey-type theorem (our main result), whose immediate application gives the mentioned generalization.

\subsection{Multiple recurrence}

One of the main themes in arithmetic combinatorics is proving multiple recurrence results for a given semigroup. Such is the celebrated \Szemeredi's theorem:

\begin{theorem}[\Szemeredi, \cite{Szemeredi}]
	Any subset $A \subseteq \N$ of positive upper density, i.e. $\updens(A) := \limsup_{n \to \w} {|A \cap [0,n)| \over n} > 0$, contains arbitrarily long arithmetic progressions. In other words, for every $k \ge 1$, there is $n \in \N$ such that
	$$
	A \cap (A - n) \cap (A - 2n) \cap \cdots \cap (A - kn) \ne \0.
	$$
\end{theorem}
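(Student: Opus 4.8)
The plan is to deduce \Szemeredi's theorem from a statement about multiple recurrence for measure-preserving systems, via the \emph{Furstenberg correspondence principle}, and then to prove that recurrence statement by an induction on the structure of such systems.

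First step: the correspondence principle. Given $A \subseteq \Z$ with $\updens(A) > 0$, let $x_A \in \{0,1\}^{\Z}$ be its indicator sequence, let $X = \overline{\{T^j x_A : j \in \Z\}}$ inside $\{0,1\}^{\Z}$ with $T$ the shift, and let $\tilde A = \{x \in X : x(0) = 1\}$, which is clopen. Choosing a sequence of $n$'s realizing $\updens(A)$ and passing to a weak-$*$ limit point $\mu$ of the averages $\frac{1}{2n}\sum_{-n \le j < n} \delta_{T^j x_A}$, one gets a $T$-invariant Borel probability measure on $X$ with $\mu(\tilde A) \ge \updens(A) > 0$ and, more importantly, for all $n_1, \dots, n_k$,
\[
\updens\big(A \cap (A - n_1) \cap \dots \cap (A - n_k)\big) \;\ge\; \mu\big(\tilde A \cap T^{-n_1}\tilde A \cap \dots \cap T^{-n_k}\tilde A\big),
\]
which is verified by unwinding the definitions on the clopen set $\tilde A$. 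Hence it suffices to show that every measure-preserving system $(X,\mathcal B,\mu,T)$ has the \emph{SZ property}: for every $B \in \mathcal B$ with $\mu(B) > 0$ and every $k \ge 1$ there is $n \ge 1$ with $\mu\big(B \cap T^{-n}B \cap \dots \cap T^{-kn}B\big) > 0$.

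Second step: proving the SZ property by structural induction, following Furstenberg. The ingredients are: (a) the trivial system has it; (b) \emph{weakly mixing} systems have it, because $\frac1N\sum_{n<N} 1_B \cdot T^n 1_B \cdots T^{kn}1_B \to \mu(B)^{k+1}$ in $L^2$, a fact proved by iterating the van der Corput trick on $k$ --- this is exactly the kind of difference-lemma input the present paper abstracts; (c) \emph{compact} extensions preserve the SZ property, because the sets $T^{in}B$ are almost periodic and finitely many of their ``translates'' suffice to catch a positive-measure return; (d) \emph{weakly mixing} extensions preserve it, by a \emph{relative} van der Corput estimate bounding the fibered multiple averages by integrals of conditional expectations over the base; (e) the SZ property passes to \emph{inverse limits} of increasing towers of factors. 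Feeding these into the Furstenberg structure theorem --- every proper extension $X \to Y$ admits an intermediate proper extension that is compact or weakly mixing, and any system is the inverse limit of a transfinite tower of such extensions over the trivial system --- yields the SZ property for an arbitrary $(X,\mathcal B,\mu,T)$, completing the proof.

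The technical heart, and the step I expect to be the main obstacle, is (d): if a factor $Y$ has the SZ property and $X$ is a relatively weakly mixing extension of $Y$, then so does $X$. One must set up a relative (fibered) van der Corput inequality and iterate it $k$ times --- an induction within the induction that peels off one factor at each stage while introducing auxiliary shifts --- to reduce the positivity of $\mu\big(B \cap T^{-n}B \cap \dots \cap T^{-kn}B\big)$, on average in $n$, to the already-known SZ property of $Y$ applied to a suitable positive-measure subset of $Y$. Two secondary difficulties are the transfinite bookkeeping in (e) and the careful statement and proof of the structure theorem; by contrast, the correspondence principle and cases (a)--(c) should be comparatively routine once the ergodic-theoretic framework (factors, conditional expectations, relative products, almost periodicity) is in place.
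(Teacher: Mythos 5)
The paper does not prove this theorem: it cites Szemer\'edi's original combinatorial argument and only \emph{sketches}, in the introduction, the alternative derivation via the Furstenberg correspondence principle and the Multiple Recurrence Theorem. Your proposal follows exactly that ergodic-theoretic route, so at the level of strategy you are on the same track the paper gestures at. Your account of the correspondence step is correct (orbit closure of the indicator sequence, a weak-$*$ limit of orbital averages along a subsequence realizing the upper density, and verification of the inequality on the clopen cylinder set), and your decomposition of the SZ property --- weakly mixing systems via iterated van der Corput, compact extensions via almost periodicity, relatively weakly mixing extensions via a fibered van der Corput estimate, inverse limits, all glued together by the structure theorem --- is the standard Furstenberg (and Furstenberg--Katznelson) architecture, with the main difficulty correctly located at the relatively weakly mixing extension step.

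That said, as written this is a roadmap rather than a proof: the structure theorem (every nontrivial extension admits an intermediate extension that is compact or relatively weakly mixing), the almost-periodicity argument in (c), and above all the relative van der Corput argument in (d) are invoked by name but not carried out, and these steps contain essentially all of the mathematical content; each is a substantial theorem in its own right. One further point of contact with the present paper: the machinery developed here (the $\De$-Ramsey theorem and the resulting van der Corput lemma for $\De$-filters) supplies only the \emph{absolute} van der Corput input used in your step (b) --- cf.\ \cref{thm:van-der-Corput} and the proof of \cref{st:double=>triple} --- whereas the fibered version needed for (d) is relative to a factor and is not provided by anything in this paper. So if the intent was to derive Szemer\'edi's theorem from the paper's results alone, there is a genuine gap; if the intent was to outline the classical Furstenberg proof, the outline is accurate but incomplete.
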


The conclusion of \Szemeredi's theorem can be viewed as a multiple recurrence statement for the action of the semigroup $G = \N$ on itself by right translation, where we equip the action space $\N$ with the upper density function viewed as a finitely subadditive invariant probability measure. Shortly after \Szemeredi's original proof, Furstenberg came up with a way of translating this statement to a multiple recurrence statement for an actual probability measure preserving (p.m.p.) action of $\Z$ (with the measure being countably additive), and then proved the latter statement (now known as Furstenberg's Multiple Recurrence Theorem \cite{Furst_orig}) for arbitrary p.m.p. actions of $\Z$.

\subsection{Mixing along filters}

One of the key ingredients in the proof of the Multiple Recurrence Theorem is the fact that a certain strong (quantitative) one recurrence, known as \emph{weak mixing}, amplifies itself to a strong multiple recurrence (weak mixing of all orders). This amplification is where the mentioned van der Corput difference lemmas are used:

\begin{displaymath}\xymatrixcolsep{9pc}\xymatrix@C-.2cm{
		\text{strong one recurrence} \ar@{=>}[r]^-{\txt{\scriptsize van der Corput trick}} & \text{strong multiple recurrence.}
	}\end{displaymath}

These strong notions of recurrence as well as the van der Corput involve a filter $\FC$ on the acting group or, more generally, semigroup $G$. The definitions of filters, limits along them, and other related terminology, is given in \cref{subsec:filters} below. Here is a typical example to keep in mind:
\begin{example}\label{example:density_filter}
	For $G = \N$ (or any amenable semigroup), define \emph{upper density} $\updens(A)$ for subsets $A \subseteq \N$ by
	$$
	\updens(A) := \limsup_{n \to \w} {|A \cap [0,n)| \over n}.
	$$
	The sets of (upper) density $0$ form an ideal, so their complements form a filter, which we denote by $\FC_\updens$ and refer to as the \emph{density filter} on $\N$.
\end{example}

\begin{defn}
Let $G$ be a semigroup, $\FC$ a filter on $G$, and $(X,\nu)$ a probability space. Measure-preserving (right) action $(X,\nu) \ractson^\al G$ is called mixing along $\FC$ if for every $\nu$-measurable $A,B \subseteq X$, we have
$$
\lim_{g \to \FC} \nu(A \cap B \cdot_\al g^{-1}) = \nu(A) \nu(B).
$$
\end{defn}
Because $\nu(B \cdot_\al g^{-1}) = \nu(B)$, what this definition says is that as $g \to \FC$, the sets $A$ and $B \cdot_\al g^{-1}$ become more and more probabilistically independent. In other words, for any pair $(x,y) \in X^2$, the pair $(x, y \cdot_\al g)$ looks more and more like a random pair $(u,v) \in X^2$.

\begin{remark}
	We can recover the usual notions of mixing by choosing appropriate filters: the density filter $\FC_d$ for weak mixing \cite{Berg-Gorod}*{Theorem 1.1}, the filter $\IPstar$ for mild mixing \cite{Furst_book}*{Proposition 9.22}, and the \Frechet filter for strong mixing \cite{Rudolph}*{Definition 4.3}.
\end{remark}

\subsection{The van der Corput property}

Any probability measure-preserving (right) action \\ $(X,\nu) \ractson^\al G$ of a semigroup $G$ can be lifted to a unitary (left) action $G \actson^\al L^2(X,\nu)$ by $(g \cdot_\al f)(x) := f(x \cdot_\al g)$. In terms of this unitary action, denoting by $\gen{\cdot,\cdot}$ the inner product in $L^2(X,\nu)$, mixing along $\FC$ is equivalent to the following: for every $f_0,f_1 \in L^2(X,\nu)$,
$$
\lim_{g \to \FC} \gen{f_0, g \cdot_\al f_1} = \int_X f_0 d\nu \int_X f_1 d\nu.
$$
In light of this, putting $e_g := g \cdot_\al f_1$, one can see how the following property of filters on semigroups may be relevant here:

\begin{defn}\label{defn:vdC-prop}
	A filtered \M-semigroup $(G, \PC, \FC)$ (see \cref{subsec:M-semigroups} for the definition) is said to have the \emph{van der Corput property} if for every weakly upper $\PC$-semimeasurable (see \cref{defn:M-measurability}) bounded sequence $(e_g)_{g \in G}$ in a Hilbert space $\Hil$, we have
	$$
	\lim_{h \to \FC} \lim_{g \to \FC} \gen{e_g, e_{gh}} = 0 \implies \lim_{g \to \FC} \gen{f,e_g} = 0, \ \forall f \in \Hil.
	$$
\end{defn}

The conclusion in the implication above simply says that the limit along $\FC$ of the sequence $(e_g)_{g \in G}$ is $0$ in the weak topology of $\Hil$. This property is really a stronger and more general version of the simple Hilbert space fact (consequence of Bessel's inequality) that any bounded sequence of pairwise orthogonal vectors converges to $0$ in the weak topology, where the convergence is in the usual sense, i.e. along the \Frechet filter (see \examplesref{examples:filters}{example:Frechet_filter} for the definition). Here, we write this fact in the appropriate form to make the similarity apparent: 

\begin{lemma}\label{st:Bessel}
	For every bounded sequence $(e_n)_{n \in \N}$ in $\Hil$,
	$$
	\forall m \ne 0 \forall n \; \gen{e_n, e_{n+m}} = 0 \implies \lim_{n \to \w} \gen{f, e_n} = 0, \ \forall f \in \Hil.
	$$
\end{lemma}

\begin{remark}
	Letting $\FC$ denote the \Frechet filter on $\N$, our main result (\cref{st:difference-Ramsey_for_del-filters}) implies that $(\N, \dualF \uplus \FC, \FC)$ has the van der Corput property, as explained in \cref{remark:generalizing_Bessel_for_Frechet}. However, $(\N, \Pow(\N), \FC)$ does not have the van der Corput property, in other words, the implication
	$$
	\lim_{m \to \w} \lim_{n \to \w} \; \gen{e_n, e_{n+m}} = 0 \implies \lim_{n \to \w} \gen{f, e_n} = 0, \ \forall f \in \Hil
	$$
	does not hold for \emph{all} sequences $(e_n)_{n \in \N} \subseteq \Hil$. For example, let $(f_n)_{n \in \N}$ be a sequence of orthonormal vectors in a Hilbert space and, for each $n \in \N$, take
	$$
	e_n := \defbycases{f_0}{n = 2^k, \text{ for some } k}{f_n}.
	$$
	Then, the condition $\displaystyle \lim_{n \to \w} \lim_{m \to \w} \gen{e_m, e_{m+n}} = 0$ holds, but $\gen{f_0, e_{2^k}} = 1$ for all $k \in \N$. One reason as to why this sequence is a counterexample is that the set $\set{2^k : k \in \N}$ is not $\w$-differentiable $\mmod{\FC}$ as explained in \examplesref{examples_differentiable_sets}{example_differentiability_for_Frechet}.
\end{remark}

In ergodic Ramsey theory, a \emph{van der Corput lemma} usually refers to a statement that the van der Corput property holds for a filter on a semigroup $G$ with $\PC = \Pow(G)$. An instance of this was proven by Furstenberg \cite{Furst_book}*{Lemma 4.9} for the density filter $\FC_d$ on $G = \N$, with $\PC = \Pow(\N)$, as an important ingredient in his proof of Multiple Recurrence Theorem. Furthermore, instances of the van der Corput lemma for various filters have been used in deriving multiple recurrence results for semigroups other than $\N$.

This apparent usefulness of the van der Corput property makes one wonder for which filters (more precisely, filtered \M-semigroups) it holds. Besides the density filter, it was previously known to hold for the $\IPstar$ filter \cite{Furst_book}*{Lemma 9.24} and idempotent ultrafilters \cite{Berg-McCutch}*{Theorem 2.3} on arbitrary semigroups (with $\PC$ being the powerset). Furthermore, a version of this property was noticed and used by the author in \cite{me_prob_groups} for the filter of conull sets of an invariant probability measure $\mu$ on a group (with $\PC$ being the $\si$-algebra of $\mu$-measurable sets). The proofs of these van der Corput lemmas all follow a general flow, even though different features of the filters are used to run this flow. The current work is devoted to pinning down a general property of filters on semigroups (more precisely, filtered \M-semigroups) that implies the van der Corput property and is satisfied by all of the above-mentioned examples.

\subsection{Underlying Ramsey theory}

Besides the natural urge of trying to find one proof that works for all of the existing instances of the van der Corput lemmas, the author's motivation for the current work was a realization that the van der Corput property is driven by a certain Ramsey-theoretic condition for graphs on semigroups, which we now briefly discuss.

\begin{defn}\label{defn:difference-Ramsey_prop}
	Say that a filtered \LL-semigroup $(G, \LC, \FC)$ (see \cref{defn:filtered_L-space}) has the \emph{difference-Ramsey property} if any graph\footnote{By a \emph{graph} we simply mean a binary relation.} $E \subseteq G^2$ satisfying $\forall^\FC h \forall^\FC g \ E(g,gh)$ contains arbitrarily large complete subgraphs in any $\LC$-large set $A$, i.e. for any $n \in \N$ there is a sequence $(g_i)_{i \le n}$ of elements in $A$ such that $E(g_i, g_j)$ for all $i<j<n$.
\end{defn}

One way to think about it is as follows: label each edge $(g_1, g_2) \in E$ with all possible ratios\footnote{Since $G$ is only a semigroup, there may be more than one ratio or none at all.}, i.e. all $h \in G$ such that $g_1 h = g_2$. Then, the hypothesis reads as follows: for $\FC$-almost every label $h \in H$, $\FC$-almost every vertex $g \in G$ has an outgoing edge in $E$ with that label. This is a way of expressing via the semigroup operation that the graph $E$ has lots of edges. The conclusion is, as expected, that the graph $E$ contains arbitrarily large complete subgraphs, and moreover, these subgraphs can be found ``locally'' in large enough subsets of vertices.

We now prove that, indeed, this Ramsey property implies that of van der Corput. The argument we give here is implicitly present in all known proofs of the van der Corput property for particular examples of filters, however, the difference-Ramsey property had not been explicitly isolated before.

\begin{theorem}\label{st:diff-Ramsey=>vdC}
	If a filtered \LL-semigroup $(G, \LC, \FC)$ \textup{(see \cref{defn:filtered_L-space})} has the difference-Ramsey property, then the filtered \M-semigroup $(G, \dualF \cup \LC, \FC)$ has the van der Corput property.
\end{theorem}
\begin{proof}
	Letting $\PC := \dualF \cup \LC$ and using the notation of \cref{defn:vdC-prop}, we fix a weakly upper $\PC$-semimeasurable sequence $(e_g)_{g \in G} \subseteq \Hil$ with $\|e_g\| \le 1$, and suppose the conclusion fails for a nonzero vector $f \in \Hil$, i.e. there is $\e > 0 $ such that the set $A = \set{g \in G : |\gen{e_g, f}| \ge \e}$ is $\FC$-positive. By the semimeasurability hypothesis on $(e_g)_{g \in G}$, $A \in \PC$, so it must be $\LC$-large. Choose $n \in \N$ so large that $\|f\|^2 < n \e^2 / 2$, and $\de > 0$ so small that $(n-1) \|f\|^2 \de \le \e^2/2$. Applying the difference-Ramsey property to $A$ and the graph $E \subseteq G^2$ defined by
	$$
	E(g_1,g_2) :\iff |\gen{e_{g_1}, e_{g_2}}| \le \de,
	$$
	we get ``too many pairwise almost orthogonal vectors'' over $A$, that is, a sequence $(g_i)_{i < n} \subseteq A$ with $|\gen{e_{g_i}, e_{g_j}}| \le \de$ for all $i<j$. Hence, the proof of Bessel's inequality gives a contradiction:
	\begin{align*}
		0 &\le \|f - \sum_{i<n} \gen{f,e_{g_i}}e_{g_i}\|^2 = \|f\|^2 - 2 \sum_{i<n} |\gen{f, e_{g_i}}|^2 + \sum_{i,j < n}\gen{f, e_{g_i}} \overline{\gen{f,e_{g_j}}}\gen{e_{g_i},e_{g_j}} \\
		&= \|f\|^2 - 2 \sum_{i<n} |\gen{f, e_{g_i}}|^2 + \sum_{i<n} |\gen{f, e_{g_i}}|^2 \cdot \|e_{g_i}\|^2 + \sum_{i,j < n, i \ne j} \|f\| \cdot \|e_{g_i}\| \cdot \|f\| \cdot \|e_{g_j}\| \cdot |\gen{e_{g_i},e_{g_j}}|\\
		&\le \|f\|^2 - \sum_{i<n} |\gen{f, e_{g_i}}|^2 + \sum_{i,j < n, i \ne j} \|f\|^2 \cdot \de \\
		&= \|f\|^2 - n \e^2 +  n (n-1) \cdot \|f\|^2 \cdot \de \\
		&\le \|f\|^2 - n \e^2 +  n \e^2 / 2 = \|f\|^2 - n \e^2 / 2 < 0. \qedhere
	\end{align*}
\end{proof}

Now the question is: Which filters (more precisely, filtered \LL-semigroups) have the difference-Ramsey property? We give an answer to this based on a notion of differentiation for subsets of a semigroup that we define in \cref{sec:differentiation}. Our main theorem (\cref{st:difference-Ramsey_for_del-filters}) states that the filters that respect this notion of differentiation in an appropriate sense have the difference-Ramsey property. These filters include all of those for which the van der Corput property was known, so, as a corollary, we obtain a van der Corput lemma generalizing its previously known instances.

\begin{acknowledgements}
I am grateful to \Slawek Solecki for very useful suggestions and comments. Many thanks to John H. Johnson, Joel Moreira, Florian K. Richter, and Donald Robertson for their enlightening remarks, corrections, and references. I also thank James Cummings and Stevo \Todorcevic for their helpful comments and positive feedback.
\end{acknowledgements}

\section{Preliminaries}\label{sec:prelim}

\subsection{Filters}\label{subsec:filters}

\begin{defn}
A filter $\FC$ on a set $S$ is a nonempty collection of subsets of $S$ that does not contain $\0$ and is closed upward\footnote{A family $\CC \subseteq \Pow(X)$ is \emph{upward} (resp. \emph{downward}) \emph{closed} if $A \in \CC$ implies $B \in \CC$ for every superset (resp. subset) $B \subseteq X$ of $A$.} and under finite intersections.
\end{defn}

Note that $\dualF := \set{A \subseteq S : A^c \in \FC}$ is an ideal and we call it the \emph{dual ideal of $\FC$}. Thus, $\AC_\FC := \dualF \cup \FC$ is an algebra with a $\set{0,1}$-valued finitely additive complete measure $\mu_\FC$ defined on it such that the measure-$1$ sets are exactly those in $\FC$.

We call a set $A \subseteq S$ 
\begin{itemize}
\setlength{\itemsep}{4pt}
\item \emph{$\FC$-large} if $A \in \FC$ (i.e. $A$ has measure $1$);
\item \emph{$\FC$-small} if $A^c$ is $\FC$-large (i.e. $A$ has measure $0$);
\item \emph{$\FC$-positive}, and write $A >_\FC 0$, if $A$ is not $\FC$-small (i.e. either $A$ has measure $1$ or the measure of $A$ is undefined).
\end{itemize}

We denote the collection of $\FC$-positive sets by $\Pos$ and it is often helpful to think of them as nonempty open sets.

For sets $A,B \subseteq S$, we write $A \simF B$ if $A \simdiff B$ is $\FC$-small. This clearly defines an equivalence relation, so we say that $\CC \subseteq \Pow(S)$ is $\FC$-invariant if for sets $A,B \subseteq S$ with $A \simF B$, $A \in \CC$ implies $B \in \CC$. We write $A \asubset B$ if $A \setminus B$ is $\FC$-small; equivalently, $A \cap H \subseteq B$ for some $\FC$-large $H \subseteq S$.

For a set $A \subseteq S$ and a property $P(\cdot)$ of elements of $S$, we write 
$$
\forall^\FC s \in A \ P(s)
$$ 
to mean that for all but an $\FC$-small set of $s$ in $A$, $P(s)$ holds; consequently, we write 
$$
\exists^\FC s \in A \ P(s)
$$ 
to mean $\neg \forall^\FC s \in A \ \neg P(s)$, i.e. there exists an $\FC$-positive set of $s$ in $A$ (in particular $A$ is $\FC$-positive) such that $P(s)$ holds.

Lastly, we recall the notion of a limit along a filter. For a topological space $X$, a sequence $(x_s)_{s \in S} \subseteq X$ and a point $x \in X$, we write
$$
\lim_{s \to \FC} x_s = x
$$
if for every open neighborhood $U \subseteq X$ of $x$, we have $\forall^\FC s \in S \; (x_s \in U)$.

\subsection{Examples of filters on semigroups}

In the sequel, we consider filters on semigroups and we start by listing some examples. Henceforth, let $G$ denote a semigroup.

\subsubsection{Almost invariant filters}

A filter $\FC$ on $G$ is called \emph{invariant} if for every $A \subseteq G$,
$$
A \text{ is $\FC$-large } \imp \forall g \ (A g^{-1} \text{ is $\FC$-large}),
$$
where $A g^{-1} := \set{h \in G : hg \in A}$. Thinking of $\FC$ as a finitely additive measure, this simply means that it is invariant under the right translation action of $G$ on itself.

A filter $\FC$ on $G$ is called \emph{almost invariant} if for every $A \subseteq G$,
$$
A \text{ is $\FC$-large } \imp \forall^\FC g \ (A g^{-1} \text{ is $\FC$-large}).
$$
Working with almost invariant filters, it is convenient to use the following notation: for a set $A \subseteq G$, put
$$
\St_\FC(A) := \set{g \in G : A g^{-1} \text{ is $\FC$-large}}.
$$
Thus, for an almost invariant filter $\FC$, if $A$ is $\FC$-large then so is $\St_\FC(A)$.
    
The class of almost invariant filters includes many important examples, most of which are actually invariant.

\begin{examples}\label{examples:filters}
\item\label{example:Frechet_filter} The \emph{\Frechet filter} on a group $G$, i.e. the filter containing all cofinite subsets of $G$, is invariant.
    
\item The filter $\FC_\mu$ of \emph{conull sets} of a finitely additive, or even subadditive, invariant nonzero measure $\mu$ on a semigroup $G$ is invariant.

\item The density filter (along a fixed \Folner sequence) $\FC_d$ on an amenable group $G$ is invariant.

\item If $G$ is a Polish group (or more generally a Baire group\footnote{A topological group $G$ is called Baire if it is not meager, i.e. a countable union of nowhere dense sets. Examples are Polish groups, as well as locally compact Hausdorff groups.}), then \emph{the collection of comeager sets} forms a filter; in fact, this filter is closed under countable intersections. Due to continuity of group multiplication, this filter is invariant under right multiplication.

\item For $G = \N$, call $A \subseteq \N$ a set of convergence if $\sum_{n \in A} {1 \over n} < \w$. Clearly, sets of convergence form an ideal and hence the collection $\Fh$ of complements of sets of convergence is a filter. Moreover, if $A$ is a set of convergence then so is $A - n$ for any $n \in \N$; thus, $\Fh$ is invariant.

\item Finally, \emph{idempotent ultrafilters}, i.e. maximal filters $p$ on a semigroup $G$ such that for any $A \subseteq G$,
$$
A \text{ is $p$-large } \shortiff \forall^p g \ (A g^{-1} \text{ is $p$-large}).
$$
In particular, idempotent\footnote{These ultrafilters are called idempotent because their defining condition is equivalent to $p \ast p = p$, where $\ast$ is the convolution operation defined in the same way as for measures.} ultrafilters are almost invariant. They always exist on any semigroup by Ellis's theorem, see, for example, \cite{Todorcevic}*{2.1 and 2.9}.
\end{examples}
	
\subsubsection{\IP-sets and the filter $\IPstar$}

For $n \in \N \cup \set{\w}$, put $\n := \set{0,1,...,n-1}$ if $n \in \N$ (although, set-theoretically there is not difference between $n$ and $\n$) and put $\n := \N$ if $n = \w$. Let $G$ be a semigroup. For a countable (or finite) sequence $(g_i)_{i < n}$, $n \in \N \cup \set{\w}$, of elements of $G$ and finite $\0 \ne \al \subseteq \n$, put $g_\al = g_{i_1} g_{i_2} ... g_{i_k}$, where $i_1 > i_2 > \cdots > i_k$ list the elements of $\al$ in the decreasing order; also put $g_\0 = 1_G$. Finally, let $\FP(g_i)_{i<n} = \set{g_\al : \al \subseteq \n \text{ finite}}$ and call it a \emph{finite product set} of length $n$.

A subset $A \subseteq G$ is called an \emph{IP-set} (stands for Infinite-dimensional Parallelepiped) if it is a finite product set of infinite length, i.e. $A = \FP(g_n)_{n \in \N}$ for some sequence $(g_n)_{n \in \N}$ of (not necessarily distinct) elements of $G$. There is a tight connection between $\IP$-sets and idempotent ultrafilters. Firstly, by \cite{Berg_IP}*{Theorem 2.5}\footnote{Although \cite{Berg_IP}*{Theorem 2.5} is stated and proved for $G=\N$, the same proof works for any semigroup.}, we have that every IP-set $A \subseteq G$ supports an idempotent ultrafilter, i.e. there is an idempotent ultrafilter $p$ on $G$ so that $A$ is $p$-large. Conversely, we have the following standard fact:

\begin{prop}
	For any semigroup $G$ and an idempotent ultrafilter $p$ on $G$, every $p$-large set contains an IP-set.
\end{prop}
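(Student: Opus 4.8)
The plan is to construct, given a $p$-large set $A$ with $p$ an idempotent ultrafilter, a sequence $(g_n)_{n \in \N}$ such that $\FP(g_n)_{n \in \N} \subseteq A$, by a recursive construction that at each stage maintains membership of a suitable ``tail'' set in $p$. First I would record the key consequence of idempotency: since $p$ is an ultrafilter, for any $p$-large set $B$ the set $B^\star := \set{g \in B : B g^{-1} \text{ is } p\text{-large}}$ is itself $p$-large (this is exactly the $\St_p(B) \cap B$ from the excerpt, which is $p$-large because both $B$ and $\St_p(B)$ are). The self-improvement feature is that if $g \in B^\star$ then $B g^{-1}$ is $p$-large, so we may intersect it with $B^\star$ again and keep going; this is the engine of the construction.

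The recursion goes as follows. Set $B_0 := A$ and pick $g_0 \in B_0^\star$ (nonempty since $B_0^\star$ is $p$-large, hence nonempty as $p$ does not contain $\0$). Having chosen $g_0, \dots, g_{n-1}$ and a $p$-large set $B_n \subseteq A$ with the property that $g_\al \in B_n$ — equivalently $B_n \ni g_\al$ — for a certain bookkeeping of already-formed partial products, define
$$
B_{n+1} := B_n^\star \cap \bigcap_{\0 \ne \al \subseteq \n} \big( B_n^\star \, g_\al^{-1} \big),
$$
which is a finite intersection of $p$-large sets (each $B_n^\star g_\al^{-1}$ is $p$-large precisely because $g_\al \in B_n^\star$, by the definition of $\star$), hence $p$-large; then choose any $g_n \in B_{n+1}$. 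The point of intersecting with all the shifts $B_n^\star g_\al^{-1}$ is that $g_n \in B_n^\star g_\al^{-1}$ means $g_\al g_n \in B_n^\star \subseteq B_n \subseteq A$; combined with $g_n \in B_n^\star \subseteq A$ and the already-established $g_\al \in A$, this will let us conclude by induction on $|\al|$ that every finite product $g_\al$ lands in $A$.

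The cleanest way to organize the verification is to strengthen the inductive hypothesis to: for every finite $\al \subseteq \n$, we have $g_\al \in B_{\min(\al)}^\star$ (with the convention that $g_\0 = 1_G$ is handled separately or simply excluded, since IP-sets as defined need not contain $1_G$ — actually they do, via $\al = \0$, so one should double-check whether $1_G \in A$ is needed; if so, one notes $g_\0 = 1_G \in A g_{i}^{-1} \cdot g_i$ type reasoning, or more simply restricts attention to nonempty $\al$ and observes the statement only asks $A$ to \emph{contain} an IP-set, and $\FP$ includes $g_\0$, so a minor adjustment using that $B_n^\star \subseteq B_n$ and working with $g_\al g_n$ for the new index $n$ suffices). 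Writing $\al = \set{n} \cup \be$ with all elements of $\be$ less than $n$, the definition of $B_{n+1}$ forces $g_n \in B_n^\star g_\be^{-1}$, i.e. $g_\be g_n = g_\al \in B_n^\star$, and since $B_n^\star \subseteq B_{\min(\be)}^\star$ by the nesting $B_0 \supseteq B_1 \supseteq \cdots$ (which holds because $B_{k+1} \subseteq B_k^\star \subseteq B_k$), the hypothesis propagates. I expect the main obstacle to be purely organizational rather than mathematical: getting the bookkeeping of indices and the nesting/ordering conventions (decreasing-order products $g_\al = g_{i_1}\cdots g_{i_k}$ with $i_1 > \cdots > i_k$) to line up correctly with which shift $B_n^\star g_\al^{-1}$ one needs, so that appending the new index $n$ as the \emph{largest} (hence leftmost) factor works — one must shift by $g_\al^{-1}$ on the right for $\al \subseteq \n$, exactly as written above, and confirm this matches the ``$g_\al g_n$'' order. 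Once the nesting and the $\star$-operation's self-improvement property are in place, the verification that $\FP(g_n)_{n \in \N} \subseteq A$ is a routine induction.
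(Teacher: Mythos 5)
Your overall strategy --- the Galvin--Glazer recursion, driven by the observation that $B^\star := B \cap \St_p(B)$ is $p$-large whenever $B$ is --- is the same one the paper uses, but the specific recursion you set up has a genuine gap at the step asserting that $B_{n+1}$ is $p$-large. You justify the largeness of each $B_n^\star g_\al^{-1}$ by ``$g_\al \in B_n^\star$, by the definition of $\star$''. There are two problems. The smaller one: the definition of $\star$ only converts $g_\al \in B_n^\star$ into $B_n g_\al^{-1} \in p$, not $B_n^\star g_\al^{-1} \in p$; the latter does follow, but only via the identity $B^\star g^{-1} = (B g^{-1})^\star$ together with a second application of idempotency, so it is an extra (short) lemma rather than a definition-chase. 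The serious one: your construction does not actually secure $g_\al \in B_n^\star$ (nor even $g_\al \in \St_p(B_n)$, which is what largeness of $B_n^\star g_\al^{-1}$ is equivalent to) for those $\al$ with $\max(\al) < n$. What the recursion gives is $g_\al \in B_m^\star$ for $m = \max(\al)$, and since $B_0 \supseteq B_1 \supseteq \cdots$ and $\star$ is monotone, $B_n^\star \subseteq B_m^\star$ --- the inclusion points the wrong way, and a subset of a $p$-large set need not be $p$-large. The failure is visible already at the second step: you pick $g_0 \in B_1 = B_0^\star$, but forming $B_2$ requires $B_1^\star g_0^{-1} \in p$, i.e. $g_0 \in \St_p(B_0^\star)$, which nothing in the choice of $g_0$ guarantees. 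Your fallback invariant $g_\al \in B_{\min(\al)}^\star$ has the same defect: $B_{\min(\al)}$ is a larger set than $B_n$, so membership in its star says nothing about $\St_p(B_n)$.

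The fix is to stop translating the shrinking sets by old products. Either translate the fixed set $A^\star$ throughout, maintaining the single invariant $g_\al \in A^\star$ for all finite $\0 \ne \al$ (then $A^\star g_\al^{-1} = (A g_\al^{-1})^\star \in p$ by the star lemma applied to the fixed $A$), or do what the paper does: set $A_{n+1} := A_n \cap A_n g_n^{-1} = \del_{g_n} A_n$ with $g_n$ chosen in $A_n \cap \St_p(A_n)$. There only one new translate appears per step, and it is a translate of $A_n$ by the element $g_n$ that was just chosen inside $\St_p(A_n)$, so its largeness is immediate from the definition of $\St_p$; the full intersection $A_n = \bigcap_{\al \subseteq \n} A g_\al^{-1}$, which is what makes every $g_\al$ land in $A$, then falls out of the recursion automatically.
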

\begin{proof}
	Let $A \subseteq$ be $p$-large. We recursively define sequences $(g_n)_{n \in \N}$ of elements of $G$ and $(A_n)_{n \in \N}$ of $p$-large subsets of $G$ such that
	\begin{enumerate}[(i)]
		\item $A_0 = A$,
		\item $A_{n+1} = A_n \cap A_n g_n^{-1}$,
		\item $g_n \in A_n$,
	\end{enumerate}
	and we do it as follows: having $A_n$ defined and $p$-large, by almost invariance, we know that $\St_p(A_n)$ is also $p$-large, so in particular $A_n \cap \St_p(A_n) \ne \0$ and we take $g_n \in A_n \cap \St_p(A_n)$. Thus, $A_n g_n^{-1}$ is $p$-large, and hence such is $A_{n+1} := A_n \cap A_n g_n^{-1}$, finishing the construction. Now it is easy to check that $\FP(g_n)_{n \in \N} \subseteq A$.
\end{proof}

Thus, we get:

\begin{cor}\label{st:contains_IP_iff_supports_idempotent_ultrafilter}
	A set $A \subseteq G$ contains an IP-set if and only if it supports an idempotent ultrafilter.
\end{cor}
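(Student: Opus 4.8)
The plan is to simply assemble the two halves of the equivalence, both of which are already in hand. For the ``if'' direction, suppose $A$ supports an idempotent ultrafilter $p$, which by definition means $A$ is $p$-large. Then the Proposition just proved applies verbatim: every $p$-large set contains an IP-set, so $A$ contains an IP-set. No further work is needed here.

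For the ``only if'' direction, suppose $A$ contains an IP-set $B = \FP(g_n)_{n \in \N}$. By \cite{Berg_IP}*{Theorem 2.5} (which, as noted in the footnote, holds for an arbitrary semigroup and not just $\N$), $B$ supports an idempotent ultrafilter $p$; that is, $B$ is $p$-large. Since $A \supseteq B$ and filters are closed upward, $A$ is also $p$-large, i.e.\ $A$ supports the same idempotent ultrafilter $p$. This closes the equivalence.

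The only point requiring any care is the bookkeeping around the word ``supports'': the statement ``$A$ supports an idempotent ultrafilter'' unpacks to ``there exists an idempotent ultrafilter $p$ with $A \in \F_p$,'' so moving from the IP-subset $B$ to its superset $A$ is justified precisely by upward closure of filters. There is no genuine obstacle in the corollary itself — its entire content is already packaged in the two cited results, so the argument is a one-line combination of them.
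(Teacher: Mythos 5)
Your argument is correct and is exactly the paper's intended (and essentially unwritten) proof: the ``if'' direction is the Proposition just proved, and the ``only if'' direction is \cite{Berg_IP}*{Theorem 2.5} together with upward closure of filters to pass from the IP-subset to $A$ itself. Nothing to add.
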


This corollary in its turn implies the following famous theorem (see \cite{Furst_book}*{Proposition 8.13}):

\begin{theorem}[Hindman]
The class of IP-sets is Ramsey, i.e. if an IP-set is partitioned into finitely many subsets, one of these subsets contains an IP-set.
\end{theorem}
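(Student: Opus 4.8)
The plan is to deduce Hindman's theorem from \cref{st:contains_IP_iff_supports_idempotent_ultrafilter} together with the existence of idempotent ultrafilters supported by a given IP-set (the cited \cite{Berg_IP}*{Theorem 2.5}). So suppose $A \subseteq G$ is an IP-set and $A = C_1 \cup C_2 \cup \dots \cup C_r$ is a finite partition. First I would invoke \cite{Berg_IP}*{Theorem 2.5} to fix an idempotent ultrafilter $p$ on $G$ with $A$ being $p$-large. Since $p$ is an ultrafilter and $A = \bigcup_{i=1}^r C_i$ is a finite union, exactly one of the pieces, say $C_{i_0}$, is $p$-large (an ultrafilter cannot split a large set into finitely many small pieces). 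Then $C_{i_0}$ supports the idempotent ultrafilter $p$, so by \cref{st:contains_IP_iff_supports_idempotent_ultrafilter} — in fact, directly by the Proposition preceding it — $C_{i_0}$ contains an IP-set. This is exactly the conclusion.

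In writing this up, the one point requiring a word of care is the passage ``exactly one $C_i$ is $p$-large.'' For a general filter this fails, but for an ultrafilter $p$ it is immediate: $p$ is maximal, so for each set $B$ either $B$ or $B^c$ is $p$-large; if no $C_i$ were $p$-large then each $C_i^c$ would be $p$-large, hence so would $\bigcap_{i=1}^r C_i^c = A^c$ by closure under finite intersections, contradicting that $A$ is $p$-large. I would state this in one sentence rather than belaboring it. I should also note that it is harmless that the partition is of the IP-set $A$ rather than of all of $G$; we only ever need $p$ to concentrate on $A$, which is what \cite{Berg_IP}*{Theorem 2.5} provides.

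There is no real obstacle here — the content has been entirely front-loaded into \cref{st:contains_IP_iff_supports_idempotent_ultrafilter} and the supporting-ultrafilter result, and what remains is the short ultrafilter-pigeonhole argument above. If anything, the only thing to watch is not to re-prove the Proposition: the cleanest exposition simply cites \cref{st:contains_IP_iff_supports_idempotent_ultrafilter} for the biconditional and uses its right-to-left direction. The proof is therefore three or four lines: fix $p$ supported on $A$, pigeonhole to get a $p$-large piece $C_{i_0}$, conclude $C_{i_0} \supseteq$ an IP-set by \cref{st:contains_IP_iff_supports_idempotent_ultrafilter}.
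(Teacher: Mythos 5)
Your proof is correct and is exactly the derivation the paper intends: the paper introduces Hindman's theorem with the words ``This corollary in its turn implies the following famous theorem,'' meaning precisely your argument of fixing an idempotent ultrafilter supported on the IP-set, applying the ultrafilter pigeonhole to the finite partition, and invoking \cref{st:contains_IP_iff_supports_idempotent_ultrafilter} to extract an IP-set from the large cell. No gaps; the write-up you sketch is the standard and intended one.
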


This theorem allows us to define a filter $\IPstar$ for which the positive sets are exactly those that contain an IP-set:
$$
\IPstar = \set{F \subseteq G : F \text{ meets every IP-set}}.
$$
To see that this is indeed closed under finite intersections, first note the following:
\begin{lemma}\label{lemma_F_intersect_IP_contains_IP}
For every $\IPstar$-large $F$ and IP-set $A \subseteq G$, $F \cap A$ contains an IP-set.
\end{lemma}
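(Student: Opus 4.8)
The plan is to leverage the correspondence between IP-sets and idempotent ultrafilters recorded in \cref{st:contains_IP_iff_supports_idempotent_ultrafilter}. Since $A$ is an IP-set, by \cite{Berg_IP}*{Theorem 2.5} it supports an idempotent ultrafilter $p$ on $G$, i.e. $A$ is $p$-large. The whole argument will then take place relative to this single ultrafilter $p$.

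The second step is to show that $F$ is also $p$-large. Since $p$ is an \emph{ultra}filter, either $F$ or $F^c$ is $p$-large. If $F^c$ were $p$-large, then $F^c$ supports the idempotent ultrafilter $p$, so by \cref{st:contains_IP_iff_supports_idempotent_ultrafilter} it would contain an IP-set $B$; but $B \subseteq F^c$ would be disjoint from $F$, contradicting the hypothesis that $F$ meets every IP-set. Hence $F$ is $p$-large.

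Finally, $F \cap A$ is the intersection of two $p$-large sets, hence $p$-large, so it supports the idempotent ultrafilter $p$; applying \cref{st:contains_IP_iff_supports_idempotent_ultrafilter} once more yields that $F \cap A$ contains an IP-set, as desired.

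I do not expect a genuine obstacle here: the proof is a short, formal manipulation of the already-established dictionary between ``contains an IP-set'' and ``supports an idempotent ultrafilter.'' The one point that must be handled with care is that $p$ is honestly an ultrafilter, since this is precisely what licenses the dichotomy ``$F$ or $F^c$ is $p$-large'' used in the second step; everything else is routine.
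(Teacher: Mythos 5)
Your proof is correct, and there is no circularity: everything you invoke (\cite{Berg_IP}*{Theorem 2.5} and \cref{st:contains_IP_iff_supports_idempotent_ultrafilter}) is established in the paper before this lemma. The route is slightly different from the paper's, though. The paper's proof is a one-liner that applies Hindman's theorem as a black box: partition $A$ as $(F \cap A) \sqcup (F^c \cap A)$; by Hindman's theorem one of the two cells contains an IP-set, and it cannot be $F^c \cap A$ since $F$ meets every IP-set, so it must be $F \cap A$. You instead bypass Hindman's theorem and work directly with a single idempotent ultrafilter $p$ supported on $A$, using ultraness to decide between $F$ and $F^c$ and then intersecting two $p$-large sets. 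Since the paper derives Hindman's theorem from \cref{st:contains_IP_iff_supports_idempotent_ultrafilter} in the first place, your argument is essentially the proof of Hindman's theorem inlined for the specific two-cell partition needed here; what it buys is self-containedness (you never need the full partition-Ramsey statement, only the dichotomy for an ultrafilter), at the cost of being a bit longer than the paper's citation of Hindman. Both are valid, and your care about $p$ being an honest ultrafilter is exactly the right point to flag, since that dichotomy is what replaces the pigeonhole step in the partition argument.
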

\begin{proof}
Immediately follows from Hindman's theorem and the definition of $\IPstar$.
\end{proof}

We can now easily conclude:
\begin{prop}
$\IPstar$ is a filter.
\end{prop}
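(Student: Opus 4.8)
The plan is to verify the three defining properties of a filter for $\IPstar$: it is nonempty, does not contain $\0$, and is closed upward and under finite intersections. The upward closure and the exclusion of $\0$ are immediate from the definition: if $F \subseteq F'$ and $F$ meets every IP-set, then so does $F'$; and $\0$ meets no IP-set (IP-sets are nonempty, containing $1_G = g_\0$), so $\0 \notin \IPstar$. Nonemptiness is also clear, since $G$ itself meets every IP-set. So the only substantive point is closure under finite intersections, and by induction it suffices to handle the intersection of two sets.

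For closure under intersection, suppose $F_1, F_2$ are $\IPstar$-large; I must show $F_1 \cap F_2$ meets every IP-set $A$. This is exactly where \cref{lemma_F_intersect_IP_contains_IP} does the work: since $F_1$ is $\IPstar$-large and $A$ is an IP-set, $F_1 \cap A$ contains an IP-set, call it $A'$. Now $A'$ is itself an IP-set, and $F_2$ is $\IPstar$-large, so $F_2 \cap A'$ is nonempty (in fact contains an IP-set, again by the lemma, though nonemptiness is all we need here). Since $F_2 \cap A' \subseteq F_2 \cap (F_1 \cap A) = (F_1 \cap F_2) \cap A$, we conclude $(F_1 \cap F_2) \cap A \ne \0$. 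As $A$ was an arbitrary IP-set, $F_1 \cap F_2 \in \IPstar$.

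I do not anticipate a genuine obstacle here: the entire content has been front-loaded into \cref{lemma_F_intersect_IP_contains_IP}, which in turn rests on Hindman's theorem. The one place to be slightly careful is the logical structure of the induction for finitely many sets — but this is the standard reduction of "closed under finite intersections" to "closed under binary intersections" (together with the fact that the whole space $G$ lies in the filter to serve as the base case / empty intersection), and warrants at most a line.
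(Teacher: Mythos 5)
Your proof is correct and follows the same route as the paper: both reduce to binary intersections and apply \cref{lemma_F_intersect_IP_contains_IP} twice, first to extract an IP-set $A' \subseteq F_1 \cap A$ and then to find a nonempty (indeed IP) subset of $F_2 \cap A'$, which lies in $(F_1 \cap F_2) \cap A$. The extra verification of upward closure, nonemptiness, and $\0 \notin \IPstar$ is fine and is simply left implicit in the paper.
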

\begin{proof}
Let $F_1$ and $F_2$ be $\IPstar$-large and we need to show that so is $F_1 \cap F_2$. To this end, fix an IP-set $A \subseteq G$. By Lemma \ref{lemma_F_intersect_IP_contains_IP}, there is an IP-set $A' \subseteq F_1 \cap A$. But by the same lemma, there is a further IP-set $A'' \subseteq F_2 \cap A'$, and thus $A'' \subseteq (F_1 \cap F_2) \cap A$, so in particular, $(F_1 \cap F_2) \cap A \ne \0$.
\end{proof}

Moreover, \cref{st:contains_IP_iff_supports_idempotent_ultrafilter} implies that $\IPstar$ is the intersection of all idempotent ultrafilters on $G$.

\begin{remark*}
Because of the latter fact, the statements below that are true for $\IPstar$ can be derived indirectly from them being true for idempotent ultrafilters.
\end{remark*}

\subsection{\M-semigroups}\label{subsec:M-semigroups}

In literature, a measurable space is a pair $(S,\AC)$, where $X$ is a set and $\AC$ is a $\si$-algebra on $S$. For the purpose of stating the van der Corput property, we would like to drop the last requirement, and for an arbitrary collection $\PC \subseteq \Pow(S)$ of subsets of $S$, we call the pair $(S, \PC)$ an \emph{\M-space}\footnote{Here \M stands for measurable, but we use the term ``\M-space'' rather than ``measurable space'' to avoid abuse of terminology and confusion.} (also an \emph{\M-semigroup} if $S$ is a semigroup). We refer to the sets in $\PC$ as \emph{$\PC$-measurable sets} and we extend this over functions in the following definition.

\begin{defn}\label{defn:M-measurability}
	For an \M-space $(S, \PC)$, a function $f : S \to \R$ is said to be \emph{upper $\PC$-semimeasurable} if for each $r \in \R$, $f^{-1}([r,\w)) \in \PC$. For a Hilbert space $\Hil$ with inner product $\gen{\cdot, \cdot}$, a function (i.e. sequence) $e : S \to \Hil$ is said to be \emph{weakly upper $\PC$-semimeasurable} if for every $h \in \Hil$, the function $S \to \displaystyle \R^+$, given by $s \mapsto |\gen{h, e(s)}|$, is upper $\PC$-semimeasurable.
\end{defn}

\begin{remark}\label{remark:equivalence_of_measurability_for_si-algebras}
It is not hard to check that if $\PC$ is actually a $\si$-algebra, then upper $\PC$-semimeasurability for a function $f : S \to \R$ coincides with the classical notion of $\PC$-measurability. Consequently, for a function $e : S \to \Hil$, being weakly upper $\PC$-semimeasurable is the same as being weakly $\PC$-measurable in the classical sense, i.e. as a function from the measurable space $(S, \PC)$ to the measurable space $(\Hil, \BC_\mathrm{w}(\Hil))$, where $\BC_\mathrm{w}(\Hil)$ is the Borel $\si$-algebra of $\Hil$ with respect to the weak topology.
\end{remark}

Finally, we remark that the van der Corput property is really a property of a filter on an \M-semigroup and to make it easy to state we fix the following terminology: for a filter $\FC$ on a set $S$ and $\PC \subseteq \Pow(S)$, we refer to the triple $(S,\PC,\FC)$ as a \emph{filtered \M-space} (also a \emph{filtered \M-semigroup} if $S$ is a semigroup). Note, however, that we do not impose any relation between $\PC$ and $\FC$.

\subsection{Notions of largeness and filtered \LL-semigroups}

We now generalize the notion of a filter by dropping the closure under intersections requirement.

\begin{defn}
	For a nonempty set $S$, call a collection $\LC \subseteq \Pow(S)$ a \emph{largeness notion on $S$} if it is nonempty, upward closed and $\0 \notin \LC$. We refer to the pair $(S,\LC)$ as an \emph{\LL-space} (also an \emph{\LL-semigroup} if $S$ is a semigroup).
\end{defn}

Throughout, we will use the following terminology. For a largeness notion $\LC \subseteq \Pow(S)$, and we say a set $A \subseteq S$ is \emph{$\LC$-large} to mean $A \in \LC$. Furthermore, for a property $P$ of points $s \in S$, we write
\begin{center}
	there are $\LC$-many $s \in S$ for which $P(s)$ holds
\end{center}
to mean that the set $\set{s \in S : P(s) \text{ holds}}$ is $\LC$-large.

Clearly, any filter $\FC$ on $S$ is a largeness notion, but $\Pos$ is also a largeness notion, while it may not be a filter. For our purposes, we will consider semigroups with a fixed largeness notion and a filter, so we make the following definition.

\begin{defn}\label{defn:filtered_L-space}
	Let $(S,\LC)$ be an \LL-space. A filter $\FC$ on $S$ is said to be compatible with $\LC$ if $\FC \subseteq \LC$. We refer to such triple $(S, \LC, \FC)$ as a \emph{filtered \LL-space} (also a \emph{filtered \LL-semigroup} if $S$ is a semigroup).
\end{defn}


\section{Differentiability of subsets of semigroups}\label{sec:differentiation}

We now proceed to define a stratification of a given largeness notion on a semigroup $G$; namely, we introduce quantitative strengthening of largeness using the semigroup operation. This is analogous to $n$-differentiability being a quantitative strengthening of continuity for functions on $\R$, thinking of continuity as $0$-differentiability.

Throughout this section, let $\LC$ and $\LC'$ denote largeness notions on a semigroup $G$.

\subsection{Definition and examples}

We set up notation that emphasizes the analogy with differentiation.

\begin{notation}
For $A \subseteq G$ and $g \in G$, put 
$$
\del_g A := A \cap A g^{-1}
$$
and call it \emph{the directional derivative of $A$ in the direction of $g$}.
\end{notation}

\begin{defn}\label{defn:differentiation}
For $A \subseteq G$, put $\De^0(A / \LC, \LC') := A$ and call $A$ \emph{$0$-differentiable over $(\LC,\LC')$} if $A \in \LC$. For $n \ge 1$, we recursively define sets $\De^n(A / \LC, \LC')$ and notion of $n$-differentiability as follows: let $\De^n(A / \LC, \LC')$ denote the set of all directions $g \in G$ in which the derivative of $A$ is $(n-1)$-differentiable, that is,
$$
\De^n(A / \LC, \LC') := \set{g \in G : \del_g A \text{ is $(n-1)$-differentiable over $(\LC, \LC')$}},
$$
and call $A$ \emph{$n$-differentiable over $(\LC, \LC')$} if $\De^n(A / \LC, \LC') \in \LC'$. We say that $A \subseteq G$ is \emph{$\w$-differentiable over $(\LC, \LC')$} if it is $n$-differentiable over $(\LC,\LC')$ for every $n \in \N$. For $n \le \w$, let $C^n(\LC, \LC') \subseteq \LC$ denote the collection of all $n$-differentiable sets over $(\LC,\LC')$.
\end{defn}

Before illustrating the notion of differentiability on some examples, we record the following self-induction phenomenon, which, in the author's opinion, is what often lies at the heart of statements of the form
\begin{center}
	one recurrence $\implies$ multiple recurrence,
\end{center}
mentioned in the introduction.

\begin{lemma}\label{st:self-induction}
	If $C^1(\LC, \LC') = \LC$, then $C^\w(\LC, \LC') = \LC$.
\end{lemma}
\begin{proof}
	Every set in $\LC$ being $1$-differentiable over $(\LC, \LC')$, means, by definition, that $\De^2(A / \LC, \LC') = \De^1(A / \LC, \LC')$ for every $A \in \LC$. Iterating this gives $\De^n(A / \LC, \LC') = \De^1(A / \LC, \LC')$ for every $A \in \LC$, which implies $C^\w(\LC, \LC') = \LC$.
\end{proof}

We also record that differentiability over $(\LC, \LC')$ is upward closed with respect to $(\LC, \LC')$.

\begin{prop}\label{st:diff_is_hereditary-in-L}
	For $i = 0,1$, let $\LC_i,\LC_i'$ be largeness notions on $G$ with $\LC_i \subseteq \LC_i'$. Then for any $n \le \w$, $C^n(\LC_0, \LC_1) \subseteq C^n(\LC_0', \LC_1')$.
\end{prop}
\begin{proof}
	Straightforward induction on $n$.	
\end{proof}

Although \cref{defn:differentiation} is stated for potentially different $\LC$ and $\LC'$, it will primarily be used with $\LC = \LC'$. Thus, we make the following convention.

\begin{convention}
	For $\LC = \LC'$, we say \emph{$n$-differentiable over $\LC$}, instead of $n$-differentiable over $(\LC,\LC)$, and write $\De^n(A / \LC), C^n(\LC)$, in lieu of $\De^n(A / \LC, \LC), C^n(\LC, \LC)$. Furthermore, we omit writing the superscript $n$ in $\De^n$ if $n = 1$. Lastly, we colloquially refer to the sets of the form $\De(A / \LC)$, for some $A \subseteq G$, as \emph{$\De$-sets over $\LC$}, and we refer to the elements in $\De(A / \LC)$ as \emph{differentiation-friendly directions for $A$}.
\end{convention}

\begin{examples}\label{examples_differentiable_sets}
\item\label{example_differentiability_for_invariant_filter} For an almost invariant filter $\FC$ on $G$, \emph{every $\FC$-large set is $\w$-differentiable over $\FC$.} To show this, by \cref{st:self-induction}, we only need to verify that for every $\FC$-large set $A$, the set $\De(A / \FC)$ is still $\FC$-large, which follows from the fact that it contains $\St_\FC(A)$.

\item\label{example_differentiability_for_Frechet} Let $\FC$ be the \Frechet filter on $G$. This filter is not almost invariant in general, but even when it is (e.g. when $G$ is a group or $G = \N$, $\FC$ is invariant), there are still many $\FC$-positive subsets $A \subseteq G$ that are not even $1$-differentiable over $\Pos$. For instance, let $G = \N$ and take any $A \subseteq \N$ with superlinear growth rate, i.e. for any $k \in \N$, $A \cap \del_k A$ is finite, e.g. $A = \set{2^n : n \in \N}$. Then $A$ is $0$-differentiable over $\Pos$, but it is not $1$-differentiable over $\Pos$ because $\De(A / \Pos) = \0$.

\item\label{example_differentiability_for_IP_filter} For the filter $\IPstar$ on a semigroup $G$, \emph{every $\IPstar$-positive set is $\w$-differentiable over $\Pos[(\IPstar)]$}. To show this, by \cref{st:self-induction}, it is enough to verify that for every $\IPstar$-positive set $A \subseteq G$, the set $\De(A / \Pos[(\IPstar)])$ is $\IPstar$-positive. Recalling that a set is $\IPstar$-positive precisely when it contains an IP-set, it is enough to prove the following stronger statement.
\begin{lemma}\label{lemma_IP_A_is_in_Delta(A)}
For every IP-set $A \subseteq G$ and every $g \in A$, the set $\del_g A$ contains an IP-set, and hence, $A \subseteq \De(A / \Pos[(\IPstar)])$.
\end{lemma}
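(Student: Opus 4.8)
The plan is to produce an explicit IP-subset of $\del_g A$ by passing to a tail of the sequence generating $A$. Write $A = \FP(g_i)_{i \in \N}$, and fix $g \in A$, so that $g = g_\al$ for some finite $\al \subseteq \N$. If $\al = \0$, then $g = 1_G$ and $\del_g A = A$, so there is nothing to do; hence I may assume $\al \ne \0$ and set $N := \max\al$. The candidate IP-set will be the tail product set $B := \FP(g_i)_{i > N}$.

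I would then verify $B \subseteq \del_g A = A \cap A g^{-1}$ in two steps. The inclusion $B \subseteq A$ is immediate, since any finite $\be \subseteq \set{N+1, N+2, \dots}$ is in particular a finite subset of $\N$, so $g_\be \in \FP(g_i)_{i \in \N}$. For $B \subseteq A g^{-1}$, I take an arbitrary $b = g_\be \in B$ and must show $b g \in A$: since every index appearing in $\be$ strictly exceeds every index appearing in $\al$, listing $\be \cup \al$ in decreasing order is the same as listing $\be$ in decreasing order and then $\al$ in decreasing order, so $g_{\be \cup \al} = g_\be\, g_\al = b g$ (the convention $g_\0 = 1_G$ covering the case $\be = \0$); thus $b g = g_{\be \cup \al} \in A$. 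Once this is done, $\del_g A$ contains the IP-set $B$, hence is $\IPstar$-positive, i.e. $0$-recurrent with respect to $\IPstar$; as $g \in A$ was arbitrary, this yields $A \subseteq \De(A)$.

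The only step requiring any care is the identity $g_\be\, g_\al = g_{\be \cup \al}$ for "disjoint and separated" index sets; this is exactly where the decreasing-order convention in the definition of $g_\gamma$ is used, and it is the reason for choosing the tail beyond $N = \max\al$ rather than an arbitrary cofinite set of indices. Everything else is a direct unwinding of the definitions of $\FP$, $\del_g$, and $\De$, together with the fact (recorded just above in the excerpt) that a set is $\IPstar$-positive precisely when it contains an IP-set.
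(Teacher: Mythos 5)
Your proof is correct and follows the same route as the paper: both pass to the tail product set $\FP(g_i)_{i > \max\al}$ and check it lands in $\del_g A$ via the decreasing-order multiplication convention. You simply spell out the verification $g_\be\, g_\al = g_{\be\cup\al}$ (and the degenerate case $\al = \0$) in more detail than the paper does.
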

\begin{proof}
Let $A = \FP(g_i : i \in \N)$ and fix $g \in A$. Thus, $g = g_{i_{n-1}} g_{i_{n-2}} ... g_{i_0}$ for some indices $i_{n-1} > i_{n-2} > ... > i_0$. But then, we still have $A g^{-1} \supseteq \FP(g_i : i > i_{n-1})$, and hence also $\del_g A \supseteq \FP(g_i : i > i_{n-1})$. Therefore, $g \in \De(A / \Pos[(\IPstar)])$, so $A \subseteq \De(A / \Pos[(\IPstar)])$.
\end{proof}
\end{examples}

\subsection{Properties of $\del$ and $\De$}

Throughout this subsection, all of the notions of differentiability are over $\LC$ and we will omit writing it; in particular, we will write $\De(\cdot)$ in lieu of $\De(\cdot / \LC)$.

The following proposition exhibits the connection between differentiability and finite product sets.

\begin{prop}\label{st:differentiable_set_contains_FP-shifts}
	For any $n$-differentiable set $A \subseteq G$, there is a sequence $(h_i)_{i<n} \subseteq \De(A)$ such that $\bigcap_{\al \subseteq \n} A h_\al^{-1} = \del_{h_{n-1}} ... \del_{h_1} \del_{h_0} A \in \LC$. In particular, $A$ contains $\LC$-many shifts of a finite product set of length $n$; that is, for every $g \in \del_{h_{n-1}} ... \del_{h_1} \del_{h_0} A$, $g \cdot \FP(h_i : i<n) \subseteq A$.
\end{prop}
\begin{proof}
	Take $h_0 \in \De^n(A)$ and applying induction to $\del_{h_0} A$ get $(h_i)_{1 \le i < n} \subseteq \De(\del_{h_0} A) \subseteq \De(A /)$ such that $\del_{h_{n-1}} ... \del_{h_1} \del_{h_0} A \in \LC$.
\end{proof}

Next, we record some relations between $\del$ and $\De$.

\begin{prop}\label{prop_algebraic_properties_of_del_De}
	Let $A \subseteq G$, $g,h \in G$, and $n,m \ge 0$.
	\begin{enumerate}[\textup{(\alph{enumi})}]
		\item\label{item_superassociativity_of_del} $\del$ is superassociative: $\del_{hg} A \supseteq \del_h \del g A$.
		
		\item\label{item_subcommutativity_of_De_del} $\De$ and $\del$ subcommute: $\De^n(\del_g A) \subseteq \del_g \De^n(A)$.
		
		\item\label{item_subassociativity_of_De} $\De$ is subassociative: $\De^{n+m}(A) \subseteq \De^n(\De^m(A))$. In particular, if $A$ is $(n+m)$-differentiable, then $\De^m(A)$ is $n$-differentiable.
	\end{enumerate}
\end{prop}
\begin{proof}
	For (a) just compute: 
	$
	\del_h \del_g A = A \cap A g^{-1} \cap A h^{-1} \cap A g^{-1} h^{-1} \subseteq A \cap A g^{-1} h^{-1} = \del_{hg} A.
	$
	
	For (b), we assume that $n \ge 1$ since $n=0$ case follows from the convention that $\De^0(B) = B$ for any $B \subseteq G$. Note that $\del_g A \subseteq A$ trivially implies $\De^n(\del_g A) \subseteq \De^n(A)$. As for $\De^n(\del_g A) \subseteq \De^n(A) g^{-1}$, fixing $h \in \De^n(\del_g A)$, we have that $\del_h \del_g A$ is $(n-1)$-differentiable and is contained in $\del_{hg} A$, by part (a). So, $\del_{hg} A$ is $(n-1)$-differentiable as well, and hence, $hg \in \De^n(A)$.
	
	We prove (c) by induction on $n$. The $n=0$ case follows from the convention, so we suppose 
	that the statement is true for $n \ge 0$ and prove for $n+1$. Fixing $g \in \De^{n+m+1}(A)$, we have that $\del_g A$ is $(n+m)$-differentiable, and hence, by induction, $\De^m(\del_g A)$ is $n$-differentiable. But by part (b),
	$
	\De^m(\del_g A) \subseteq \del_g \De^m(A),
	$
	so $\del_g \De^m(A)$ is also $n$-differentiable, and thus, $g \in \De^{n+1}(\De^m(A))$.
\end{proof}

Taking $m=1$ and replacing $n$ with $n-1$ in part (c) of \cref{prop_algebraic_properties_of_del_De}, we get:

\begin{cor}\label{cor_De-set_is_differentiable}
	For any $A \subseteq G$ and $n \ge 1$, 
	$$
	\De^{n-1}(\De(A)) \supseteq \De^n(A).
	$$
	In particular, if $A$ is $n$-differentiable, then $\De(A)$ is $(n-1)$-differentiable.
\end{cor}

Thus, containing a $\De$-set of an $n$-differentiable set is a structurally strong way of being $(n-1)$-differentiable.

We now arrive at a property that illustrates that the connection between $A$ and $\De(A)$ is tighter than it appears on the surgace of the definition of $\De(A)$. We refer to this property as \emph{Main Property of $\De$-sets}.

\begin{namedthm}{Main Property of $\De$-sets}\label{namedthm:main_property_of_De-sets}
	For $n \ge 1$ and $n$-differentiable $A \subseteq G$, there are $\LC$-many $g \in \De(A)$ (namely, all $g \in \De^n(A)$) such that
	\begin{goodenum}[(\roman{enumi})]
		\item $\del_g A$ is $(n-1)$-differentiable;
		
		\item $\del_g \De(A) \supseteq \De(\del_g A)$; in particular, $\del_g \De(A)$ is $(n-2)$-differentiable, if $n \ge 2$.
	\end{goodenum}
\end{namedthm}
\begin{proof}
	Because $A$ is $n$-differentiable, we have that $\De(A) \supseteq \De^n(A) \in \LC$. But for any $g \in \De^n(A)$, $\del_g A$ is $(n-1)$-differentiable. Moreover, by (b) of \cref{prop_algebraic_properties_of_del_De}, we have $\del_g \De(A) \supseteq \De(\del_g A)$.
\end{proof}

What this property says is that there are $\LC$-many directions $g$ in $\De(A)$ that are differenti-ation-friendly for both $A$ and $\De(A)$, simultaneously. Moreover, the property that $\De(A)$ contains all differentiation-friendly directions for $A$ is maintained by their directional derivatives along $g$, i.e. $\del_g \De(A)$ still contains all differentiation-friendly directions for $\del_g A$. We emphasize it here as it will play an important role later in defining a class of filters that ``respect'' all notions involved in the definition of differentiability.

\subsection{Derivation trees}

We now digress a bit to discuss the geometry underlying the definitions of differentiability and $\De$-sets. Although, this subsection provides an intuitive picture to keep in mind, nothing in it will be used in the proofs below, so it may be safely skipped.

We start with recalling some terminology regarding set-theoretic trees.

\begin{notation}
	For a set $X$, we denote by $\Fin{X}$ the set of finite tuples of elements of $X$, i.e.
	$
	\Fin{X} := \bigcup_{n \in \N} X^n,
	$
	where $X^0 = \set{\0}$. For $s \in \Fin{X}$, we denote by $|s|$ the length of $s$; thus, $s$ is a function from $\set{0,1,...,|s|-1}$ to $X$. Recalling that functions are sets of pairs, the notation $s \subseteq t$ for $s,t \in \Fin{X}$ means that $|s| \le |t|$ and $s(i) = t(i)$ for all $i < |s|$. Finally, for $s \in \Fin{X}$ and $x \in X$, we write $s \conc x$ to denote the extension of $s$ to a tuple of length $|s|+1$ that takes the value $x$ at index $|s|$; we define $x \conc s$ analogously. 
\end{notation}

\begin{defn}
	For a set $X$, a subset $T$ of $\Fin{X}$ is called a (set theoretic) \emph{tree} on $X$ if it is closed downward under $\subseteq$, i.e. for all $s,t \in \Fin{X}$, if $t \in T$ and $s \subseteq t$, then $s \in T$.
\end{defn}

\begin{notation}
	For a tree $T$ on a set $X$ and $s \in T$, define the set of extensions of $s$ in $T$ by $\ext_T(s) := \set{x \in X : s \conc x \in T}$. Call $s$ a \emph{leaf} of $T$ if $\ext_T(s) = \0$, and call $T$ \emph{pruned} if it has no leaves. Furthermore, define
	$$
	\Depth(T) :=
	\left\{\begin{array}{ll}
	-1 & \text{if } T = \0 \\
	\max_{s \in T} |s| & \text{if $T \ne \0$ and this $\max$ exists} \\
	\w & \text{otherwise}
	\end{array}\right.,
	$$
	as well as
	$$
	\depth(T) :=
	\left\{\begin{array}{ll}
	-1 & \text{if } T = \0 \\
	\min \set{|s| : s \text{ is a leaf in } T} & \text{if $T$ has a leaf} \\
	\w & \text{otherwise}
	\end{array}\right..
	$$
\end{notation}

We call $\al \in X^\N$ an \emph{infinite branch} through $T$ if for each $n \in \N$, $\al \rest{n} \in T$. Note that a nonempty pruned tree $T$ (i.e. $\depth(T) = \w$) has an infinite branch, while in general, $\Depth(T) = \w$ does not imply this.

\begin{defn}
	For a largeness notion $\LC$ on $X$, call a tree $T$ on $X$ an \emph{$\LC$-tree} if for every $s \in T$, either $s$ is a leaf, or else, $\ext_T(s) \in \LC$.
\end{defn}

Now let $X = G$ be a semigroup. For $s = (h_0,h_1,...,h_{m-1}) \in \Fin{G}$ and $A \subseteq G$, put 
$$
\del_s A := \del_{h_{m-1}} ... \del_{h_1} \del_{h_0} A
$$ 
with convention that $\del_\0 A = A$. Fixing largeness notions $\LC, \LC' \subseteq \Pow(G)$, we associate a tree to every subset of $G$.
 
\begin{defn}
	For a set $A \subseteq G$, the \emph{derivation tree of $A$ over $(\LC, \LC')$}, noted $\T{A}$, is the tree on $G$ defined by
	$$
	s \in \T{A} \iff \forall t \subseteq s \ \del_t A \text{ is $(|s|-|t|)$-differentiable over $(\LC, \LC')$}.
	$$
\end{defn}

Note that $\T{A}$ is actually an $\LC'$-tree on $\De(A / \LC, \LC')$; in fact, for each $s \in \T{A}$ that is not a leaf, $\ext_{\T{A}}(s) = \De(\del_s A / \LC, \LC')$.

\begin{prop}\label{prop:tree-properties}
	Let $A \subseteq G$.
	\begin{goodenum}
		\item $\T{A} = \0$ if and only if $A \notin \LC$.
		
		\item $\Depth(\T{A}) = \sup\set{n \in \N : A \text{ is $n$-differentiable over $(\LC, \LC')$}}$.
		
		\item\label{item:self-induction=>w-branch} If $C^1(\LC, \LC') = \LC$, then for any $A \in \LC$, $\T{A}$ is nonempty pruned; in particular, it has an infinite branch.
		
		\item\label{item:infinite-branch=>IP-set} If $\T{A}$ has an infinite branch, then $\De(A / \LC, \LC')$ contains an \IP-set. In fact, if $(h_n)_{n \in \N}$ is an infinite branch of $\T{A}$, then $\De(A / \LC, \LC') \supseteq \FP(h_n)_{n \in \N}$.
	\end{goodenum}
\end{prop}
\begin{proof}
	Follows from the definitions.
\end{proof}

For the rest of this subsection, we take $\LC = \LC'$, and use the term \emph{derivation tree of $A$ over $\LC$} instead of derivation tree of $A$ over $(\LC, \LC)$, and write $\T[\LC]{A}$ in lieu of $\T[\LC, \LC]{A}$.

\begin{examples}
	\item For an almost invariant filter $\FC$ on $G$, taking $\LC = \FC$, part \ref{item:self-induction=>w-branch} above and \examplesref{examples_differentiable_sets}{example_differentiability_for_invariant_filter} imply that for any $\FC$-large set $H \subseteq G$, $\T[\FC]{H}$ is nonempty pruned. In particular, $\T[\FC]{H}$ has an infinite branch, so, by \ref{item:infinite-branch=>IP-set} above, $\De(H / \FC)$ contains an \IP-set.
	
	\item Similarly, for the filter $\IPstar$ on $G$, taking $\LC = \Pos[(\IPstar)]$, it follows from \examplesref{examples_differentiable_sets}{example_differentiability_for_IP_filter} that for any $\IPstar$-positive set $A \subseteq G$, $\T[\LC]{A}$ is nonempty pruned, so $\De(A / \LC)$ contains an \IP-set. However, the latter is not news as \emph{any} $\IPstar$-positive set contains an \IP-set.
\end{examples}

Iterating \namedthmref{Main Property of $\De$-sets}{namedthm:main_property_of_De-sets}, we get that the derivation trees of $A$ and $\De(A / \LC)$ over $\LC$ have much in common.

\begin{prop}\label{prop_main_property_of_De-sets_with_derivation_trees}
	For $A \subseteq G$, the set $\T[\LC]{A} \cap \T[\LC]{\De(A / \LC)}$ contains an $\LC$-tree $T$ with $\depth(T) = \Depth(\T[\LC]{A})-1$ such that for each $s \in T$, $\del_s \De(A / \LC) \supseteq \De(\del_s A / \LC)$.
\end{prop}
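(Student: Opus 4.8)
The plan is to build the tree $T$ by recursion on the levels, using Corollary~\ref{cor_main_property_of_De-sets} at each step to select, at every node, only those extension directions that are simultaneously differentiation-friendly for the relevant derivative of $A$ and that preserve the containment $\del_s\De(A)\supseteq\De(\del_s A)$. Concretely, I would define $T$ so that
$$
s \in T \iff s \in \T(A) \text{ and } \del_t \De(A) \supseteq \De(\del_t A) \text{ for every } t \subseteq s.
$$
Since $\del_\0 A = A$ and $\del_\0\De(A)=\De(A)$, the empty tuple lies in $T$ iff $A$ is $\F$-positive, matching the statement (when $A$ is $\F$-small both $\T(A)$ and the asserted $\F$-tree are empty, so assume $A$ is $\w$-recurrent or, more generally, $n$-recurrent for $n=\Depth(\T(A))$). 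The condition $s\in\T(A)$ already guarantees $T\subseteq\T(A)$, and $\del_s\De(A)\supseteq\De(\del_s A)$ together with $\del_s A\subseteq A$ (so $\del_s\De(A)\subseteq\De(A)\subseteq\T(\De(A))$-relevant) will be used to check $T\subseteq\T(\De(A))$; here I would invoke part~(c) of Proposition~\ref{prop_algebraic_properties_of_del_De} and Corollary~\ref{cor_De-set_is_recurrent} to see that the $\De$-degree of $\del_s\De(A)$ is at least that of $\De(\del_s A)$, which is at least one less than that of $\del_s A$.

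The heart of the argument is showing $T$ is an $\F$-tree of the stated depth. Fix $s\in T$ that is not forced to be a leaf, i.e. suppose $\del_s A$ is $k$-recurrent for some $k\ge 1$ (this is exactly the hypothesis $|s|<\Depth(\T(A))$ along the branch through $s$, unpacked via Proposition~\ref{prop:tree-properties}(b) applied to $\del_s A$ and subadditivity from Proposition~\ref{prop_algebraic_properties_of_del_De}(d)). Apply Corollary~\ref{cor_main_property_of_De-sets} to $\del_s A$: it produces an $\F$-positive set of directions $g\in\De^k(\del_s A)\subseteq\De(\del_s A)$ such that $\del_g\del_s A$ is $(k-1)$-recurrent and $\del_g\De(\del_s A)\supseteq\De(\del_g\del_s A)$. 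For such $g$, using $\del_s\De(A)\supseteq\De(\del_s A)$ (available since $s\in T$) we get $\del_g\del_s\De(A)\supseteq\del_g\De(\del_s A)\supseteq\De(\del_g\del_s A)=\De(\del_{s\conc g}A)$, so the defining condition for $s\conc g\in T$ is met (the conditions for proper initial segments $t\subsetneq s\conc g$ already hold because $s\in T$). Hence $\ext_T(s)\supseteq\De^k(\del_s A)>_\F 0$, so $T$ is an $\F$-tree, and moreover every node $s$ with $\del_s A$ still recurrent has a proper extension in $T$ — which gives $\depth(T)\ge\Depth(\T(A))-1$. The reverse inequality $\depth(T)\le\Depth(\T(A))-1$ is immediate from $T\subseteq\T(A)$: any leaf $s$ of $T$ has $|s|\le\Depth(\T(A))$, and a short pigeonhole-style check (if $|s|=\Depth(\T(A))$ and $s\in\T(A)$ then $\del_s A$ is $\F$-positive hence, by Corollary~\ref{cor_De-set_is_recurrent}-type reasoning, we could still extend) shows leaves occur exactly at level $\Depth(\T(A))-1$ when that is finite; when $\Depth(\T(A))=\w$, $T$ has no leaves and $\depth(T)=\w$ as well.

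The main obstacle I anticipate is bookkeeping the degree of recurrence along a branch: Corollary~\ref{cor_main_property_of_De-sets} drops the recurrence degree by exactly one per step for $\del_s A$, but I must make sure the auxiliary containment condition $\del_t\De(A)\supseteq\De(\del_t A)$ genuinely propagates and does not impose an extra loss, and that ``$\del_s A$ is $k$-recurrent'' is the right stopping criterion matching $\Depth(\T(A))$ rather than being off by one. This is exactly where Proposition~\ref{prop_algebraic_properties_of_del_De}(d) (subadditivity of $\De$) and the observation $\ext_T(s)\supseteq\De^k(\del_s A)$ — as opposed to merely $\De(\del_s A)$ — do the work: choosing extensions from $\De^k(\del_s A)$ rather than $\De(\del_s A)$ is what keeps the branch alive for the full $\Depth(\T(A))-1$ many steps. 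Everything else is a routine unwinding of the recursive definitions of $\T$, $\De^n$, and $\depth$/$\Depth$.
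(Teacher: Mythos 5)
Your strategy---build $T$ by iterating Corollary~\ref{cor_main_property_of_De-sets}, keeping a degree of recurrence in reserve by extending a node $s$ only in directions from $\De^{N-|s|}(\del_s A)$, where $N=\Depth(\T(A))$---is exactly the intended argument (the paper offers nothing beyond the remark that one ``iterates'' that corollary). But the tree you actually \emph{define} does not implement this strategy, and for it the conclusion fails. The clause ``$\del_t\De(A)\supseteq\De(\del_t A)$ for every $t\subseteq s$'' is vacuous: iterating Proposition~\ref{prop_algebraic_properties_of_del_De}(c) together with monotonicity of $\del_g$ gives $\De(\del_t A)\subseteq\del_t\De(A)$ for \emph{every} tuple $t$ and every $A$. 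So your $T$ is literally all of $\T(A)$, and then neither required property holds in general. First, $\T(A)\not\subseteq\T(\De(A))$: if $A$ is exactly $2$-recurrent, then $(g)\in\T(A)$ for every $g\in\De(A)$, whereas $(g)\in\T(\De(A))$ requires $\del_g\De(A)>_\F 0$, i.e.\ $g\in\De(\De(A))$, and there is no reason to have $\De(A)\subseteq\De(\De(A))$ (only $\De^2(A)\subseteq\De(\De(A))$ is guaranteed). This is precisely the off-by-one you flagged: from $s\in\T(A)$ you only know $\del_t A$ is $(|s|-|t|)$-recurrent, so $\del_t\De(A)\supseteq\De(\del_t A)$ is only $(|s|-|t|-1)$-recurrent---one short of what $s\in\T(\De(A))$ demands. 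Second, $\depth(\T(A))$ need not be $N-1$: a direction $g\in\De(A)\setminus\De^2(A)$ with $\De(\del_g A)$ $\F$-small produces a leaf of $\T(A)$ at level $1$ even when $N$ is large.

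The repair is the construction you describe in prose but do not put into the definition: for finite $N\ge 1$, build $T$ recursively by declaring $\ext_T(s):=\De^{\,N-|s|}(\del_s A)$ for $|s|\le N-2$ and making level $N-1$ the leaves, so that the invariant ``$\del_s A$ is $(N-|s|)$-recurrent'' propagates (this is Corollary~\ref{cor_main_property_of_De-sets} applied with the full remaining degree, and it makes each extension set $\F$-positive). Then for $t\subseteq s$ with $|s|=m\le N-1$ the set $\del_t\De(A)\supseteq\De(\del_t A)$ is $(N-|t|-1)$-recurrent by Corollary~\ref{cor_De-set_is_recurrent}, and $m-|t|\le N-1-|t|$, which is exactly enough for $s\in\T(\De(A))$; membership in $\T(A)$ holds with a degree to spare, and the containment $\del_s\De(A)\supseteq\De(\del_s A)$ is automatic as noted. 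The case $\Depth(\T(A))=\w$ is not covered by ``$T$ has no leaves'' for your $T$ (again $\T(A)$ can have leaves at low levels even when $A$ is $\w$-recurrent) and needs its own treatment, e.g.\ assembling the finite-parameter trees; a general filter gives no single positive set of directions along which $\w$-recurrence is preserved. So: right idea, but the displayed definition of $T$ must be replaced by the recursion, since the one you wrote provably yields a tree that is too big.
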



\section{Intersections and differentiability}

Throughout this section, we fix an \LL-semigroup $(G,\LC)$ and all notions of differentiability will be over $\LC$. Here, we investigate the behavior of the notions associated with differentiation under intersections. More specifically, for sets $A,D \subseteq G$ with $A$ being $n$-differentiable over $\LC$, we are interested in the following two informal questions:
\begin{enumerate}[(1)]
	\item How much of $A$ does $D$ have to contain so that $A \cap D$ is still $n$-differentiable?
	
	\item How much of $\De(A)$ does $D$ have to contain so that $\De(A) \cap D$ still satisfies the Main Property of $\De$-sets (see \namedthmlabelref{namedthm:main_property_of_De-sets})?
\end{enumerate}
We give answers to both of these questions in the following two subsections.

\subsection{\thickL{n}ness}

Given sets $A \cap D$, we unravel the recursive definition of $A \cap D$ being $n$-differentiable and arrive at the following definition.

\begin{defn}\label{defn:del-thickness}
	Let $A, D \subseteq G$ and $n \in \N$, and suppose that $A$ is $n$-differentiable. For $n \ge 1$, we say that $D$ is \emph{\thickL{n} in $A$} if there are $\LC$-many $g \in G$ such that
	\begin{enumerate}[(i)]
		\item $\del_g A$ is $(n-1)$-differentiable;
		
		\item $\del_g D$ is \thickL{n-1} in $\del_g A$.
	\end{enumerate}
	For $n=0$, we say that $D$ is \emph{\thickL{0} in $A$} if $A \cap D$ is $0$-differentiable. Finally, in case $A$ is not $n$-differentiable, we simply declare $D$ to be \thickL{n} in $A$.
\end{defn}

We now check that this indeed gives us an answer to question (1) above.

\begin{prop}\label{st:del-thickness=intersection-differentiable}
	For $n \ge 0$, $A, D \subseteq G$ with $A$ being $n$-differentiable, $A \cap D$ is $n$-differentiable if and only if $D$ is \thickL{n} in $A$.
\end{prop}
\begin{proof}
	We prove by induction on $n$. For $n=0$, it is trivial, so let $n \ge 1$ and, assuming the equivalence is true for $n-1$, prove for $n$.
	
	\smallskip
	
	\noindent $\imp$: Suppose $A \cap D$ is $n$-differentiable. Then $\De^n(A \cap D) \in \LC$ and for any $g \in \De^n(A \cap D)$, $\del_g(A \cap D)$ is $(n-1)$-differentiable. But $\del_g(A \cap D) = (\del_g A) \cap (\del_g D)$, so by induction, $\del_g D$ is \thickL{n-1} in $\del_g A$.
	
	\smallskip
	
	\noindent $\rimp$: Suppose $D$ is \thickL{n} in $A$. Then $\exists^\FC g \in G$ such that $\del_g A$ is $(n-1)$-differentiable and $\del_g D$ is \thickL{n-1} in $\del_g A$. Thus, by induction, the set $(\del_g A) \cap (\del_g D)$ is $(n-1)$-differentiable, but it is equal to $\del_g(A \cap D)$, so we are done.
\end{proof}

\subsection{\DthickL{n}ness}

Using the analogy with Definition \ref{defn:del-thickness}, we define what it means for a set $D \subseteq G$ to contain a thick enough part of $\De(A / \LC)$ so that $D$ plays the same role for $A$ as $\De(A)$ in \namedthmref{Main Property of $\De$-sets}{namedthm:main_property_of_De-sets}.

\begin{defn}\label{defn:Delta-thickness}
Let $A, D \subseteq G$ and $n \in \N$, and suppose that $A$ is $n$-differentiable. For $n \ge 1$, we say that $D$ is \emph{\DthickL{n} for $A$} if there are $\LC$-many $g \in D$ such that
\begin{enumerate}[(i)]
\item $\del_g A$ is $(n-1)$-differentiable;

\item $\del_g D$ is \DthickL{n-1} for $\del_g A$.
\end{enumerate}
For $n=0$, we always say that $D$ is \emph{\DthickL{0} for $A$}. Finally, in case $A$ is not $n$-differentiable, we simply declare $D$ to be \DthickL{n} for $A$.
\end{defn}

It follows by an easy induction on $n$ that \DthickL{n}ness is closed upward, i.e. if $D' \supseteq D$, $A' \supseteq A$, and $D$ is \DthickL{n} for $A$, then $D'$ is \DthickL{n} in $A'$.

\smallskip

We now check that \cref{cor_De-set_is_differentiable} still (essentially) holds for \DthickL{n} sets.

\begin{prop}\label{prop_double_witness_in_thick} 
Let $A,D \subseteq G$, $n \ge 1$. Suppose that $A$ is $n$-differentiable and $D$ is \DthickL{n} for $A$. Then $\De^n(A) \cap \De^{n-1}(D)$ is in $\LC$, and thus, $D$ is $(n-1)$-differentiable.
\end{prop}
\begin{proof}
We prove by induction on $n$. For $n=1$, $\De^0(D) = D$ and $D$ being \DthickL{1} for $A$ is equivalent to $\De^1(A) \cap D \in \LC$. Let $n \ge 2$ and suppose the statement is true for $n-1$. By \DthickL{n}ness, there are $\LC$-many $g \in D \cap \De^n(A)$ such that $\del_g D$ is \DthickL{n-1} for $\del_g A$. But $\del_g A$ is $(n-1)$-differentiable, so by induction, $\De^{n-1}(\del_g A) \cap \De^{n-2}(\del_g D) \in \LC$. In particular, $\del_g D$ is $(n-2)$-differentiable, so $g \in \De^{n-1}(D)$. Since there are $\LC$-many such $g$ in $\De^n(A)$, it follows that $\De^n(A) \cap \De^{n-1}(D) \in \LC$.
\end{proof}

\begin{examples}\label{examples_De-thick_sets}
\item\label{example_invariant_filter_thick} Let $\LC = \Pos$, for an almost invariant filter $\FC$ on $G$. Then \emph{any $\FC$-large $H$ is \DthickL{n} for $A$ for any $A \subseteq G$ and $n \ge 0$}. Indeed, if $n \ge 1$ and $A$ is $n$-differentiable, then $\De^n(A) \asubset H \cap \St(H)$ and hence $\exists^\FC g \in H \cap \St(H)$ such that $\del_g A$ is $(n-1)$-differentiable. Moreover, since $g \in \St(H)$, $\del_g H$ is $\FC$-large, so, by induction on $n$, it must be \DthickL{n-1} for $\del_g A$.

\item\label{example_IP_filter_thick} Let $\LC = \Pos[(\IPstar)]$. Then \emph{every $\IPstar$-positive set $A$ is \DthickL{n} for $A$ for any $n \ge 0$}. Indeed, being $\IPstar$-positive, $A$ contains an \IP-set $P$. By \cref{lemma_IP_A_is_in_Delta(A)}, $P \subseteq \De(P) \subseteq \De(A)$, so for any $g \in P$, $\del_g A$ is positive, and hence $(n-1)$-differentiable. By induction on $n$, $\del_g A$ is \DthickL{n-1} for $\del_g A$.
\end{examples}

\smallskip

Lastly, we record what being \DthickL{n}ness means in terms of derivation trees.

\begin{prop}\label{prop_n-thickness_with_derivation_trees}
For $n \ge 0$ and $A, D \subseteq G$ with $A$ being $n$-differentiable, $D$ is \DthickL{n} for $A$ if and only if the tree $\T[\LC]{A} \cap \T[\LC]{D}$ contains an $\LC$-tree $T$ on $D$ with $\depth(T) = n - 1$ and such that for each $s \in T$, the set $\del_s D \cap \De(\del_s A / \LC)$ is $\LC$-large.
\end{prop}
\begin{proof}
	We prove by induction on $n$. For $n = 0$, this equivalence is trivial since both sides vacuously hold. So let $n \ge 1$ and, assuming the equivalence is true for $n-1$, prove for $n$.
	
	\smallskip

	\noindent $\imp$: Suppose $D$ is \DthickL{n} for $A$, so in particular, it follows from \cref{prop_double_witness_in_thick} that $D \cap \De(A / \LC)$ is $\LC$-large. We build the desired tree $T$ as follows. Let $L$ be the set of all $g \in D$ such that $\del_g A$ is $(n-1)$-differentiable and $\del_g D$ is \DthickL{n-1} for $\del_g A$; by \DthickL{n}ness, $L \in \LC$. For each $g \in L$, by induction, there is an $\LC$-tree $T_g \subseteq \T[\LC]{\del_g A} \cap \T[\LC]{\del_g D}$ with $\depth(T_g) = n - 2$ and such that for each $s \in T_g$, the set $\del_s \del_g D \cap \De(\del_s \del_g A / \LC)$ is $\LC$-large. It is now straightforward to check that the tree 
	$$
	T := \set{\0} \cup \set{g \conc s : g \in L, s \in T_g}
	$$
	is as desired. We only note that to verify $T \subseteq \T[\LC]{D}$, it is enough to show that $L \subseteq \De^{n-1}(D / \LC)$, which follows from \cref{prop_double_witness_in_thick} as for each $g \in L$, $\del_g D$ is \DthickL{n-1} for $A$.
	
	\smallskip
	
	\noindent $\rimp$: For $n \ge 2$, using the induction hypothesis, it is straightforward to check that for every $g \in \ext_T(\0)$, the conditions (i)--(ii) of the definition of \DthickL{n}ness hold. For $n = 1$, $T = \set{\0}$, which gives that $D \cap \De(A / \LC) = \del_\0 D \cap \De(\del_\0 A / \LC)$ is $\LC$-large, but the latter is equivalent to $D$ being \DthickL{1} for $A$.
\end{proof}


\section{Filters and differentiability}

Henceforth, we will be dealing with a fixed filter $\FC$ on a semigroup $G$ and our largeness notion will be $\LC = \Pos$. In particular, we will only use the notion of differentiability over $\Pos$, so we introduce special terminology and notation for this.

\begin{terminology}\label{term:diff-mod-filter}
	For a filter $\FC$ on a semigroup $G$ and $n \le \w$, call a set $A \subseteq G$ \emph{$n$-differentiable $\mmod{\FC}$} if it is $n$-differentiable over $\Pos$. (Note that $n$-differentiability over $\FC$ implies $n$-differentiability $\mmod{\FC}$, but the converse may not be true.) We also write $\De^n_\FC(A)$, $C^n_\FC$, $T_\FC(A)$, \thick{n}, \Dthick{n}, in lieu of $\De^n(A / \Pos)$, $C^n(\Pos)$, $\T[\Pos]{A}$, \thickL[\Pos]{n}, \DthickL[\Pos]{n}, respectively. Furthermore, we may omit writing the subscript $\FC$ altogether if the filter $\FC$ is clear from the context and there is no danger of confusion.
\end{terminology}

\subsection{Respecting differentiability}

We now define a class of filters for which the notion of differentiability is well-defined for subsets of $G / \simF$, i.e. for each $n \ge 0$, the collection $C^n_\FC$ is $\FC$-invariant.

\begin{defn}
A filter $\FC$ on $G$ is said to \emph{respect differentiability} if for all $n \ge 1$ and subsets $A \simF \tilA$ of $G$, $A$ is $n$-differentiable $\mmod{\FC}$ if and only if $\tilA$ is $n$-differentiable $\mmod{\FC}$.
\end{defn}

\begin{example}\label{example:respecting_diff_for_IPstar}
\emph{The filter $\IPstar$ on any semigroup $G$ respects differentiability.} This is because, by \examplesref{examples_differentiable_sets}{example_differentiability_for_IP_filter}, every $\IPstar$-positive set is automatically $\w$-differentiable $\mmod{\IPstar}$, so if $A \sim B$, then 
$$
A \text{ is $n$-differentiable } \mmod{\IPstar} \shortiff A >_\IPstar 0 \shortiff B >_\IPstar 0 \shortiff B \text{ is $n$-differentiable } \mmod{\IPstar}.
$$
\end{example}

The next proposition unravels the above definition to make it easier to check.

\begin{prop}\label{st:equiv_to_respecting_differentiability}
A filter $\FC$ on $G$ respects differentiability if and only if for every $\FC$-large set $H \subseteq G$, for every $n \ge 1$ and every $n$-differentiable set $A \subseteq G$, $\exists^\FC g \in G$ such that $H g^{-1} \asupset A'$ for some $(n-1)$-differentiable set $A' \subseteq \del_g A$.
\end{prop}
\begin{proof}
For $\imp$, note that if $A$ is $n$-differentiable and $H$ is $\FC$-large, then $A \cap H$ is still $n$-differentiable, so $\De^n(A \cap H)$ is $\FC$-positive. Thus, for any $g \in \De^n(A \cap H)$, $A' := \del_g (A \cap H) \subseteq \del_g A$ is $(n-1)$-differentiable and $A' \subseteq H g^{-1}$.

For $\rimp$, we assume the right-hand side and prove by induction on $n$ that if $H$ is $\FC$-large and $A$ is $n$-differentiable, then $A \cap H$ is also $n$-differentiable. The base case $n = 0$ is trivial because if $A$ is $\FC$-positive then so is $A \cap H$. Now suppose it holds for $n-1$, and let $A$ be $n$-differentiable and $H$ be $\FC$-large. Then, $\exists^\FC g \in G$ such that $H g^{-1} \asupset A'$ for some $(n-1)$-differentiable set $A' \subseteq \del_g A$, so, in particular, $A' \asubset \del_g H$. Thus, $A' \asubset \del_g (A \cap H)$ and hence $A' \cap H' \subseteq \del_g (A \cap H)$ for some $\FC$-large $H'$. By induction, $A' \cap H'$ is still $(n-1)$-differentiable, and hence so is $\del_g (A \cap H)$. The fact that this holds for $\FC$-positively many $g \in G$ means that $A \cap H$ is $n$-differentiable.
\end{proof}

\begin{example}\label{example:respecting_diff_for_almost_invariant_filters}
It follows immediately from the last proposition that \emph{any almost invariant filter respects differentiability}. This is because for every $g \in \De^n(A) \cap \St(H)$, $A' := \del_g A$ is $(n-1)$-differentiable and $Hg^{-1}$ is $\FC$-large, so $Hg^{-1} \asupset B$ holds for every set $B \subseteq G$, and in particular, for $B = A'$.
\end{example}

Finally, using \cref{st:del-thickness=intersection-differentiable}, we express respecting differentiability in terms of \thick{n}ness:
\begin{cor}
	A filter $\FC$ on a semigroup $G$ respects differentiability if and only if every $\FC$-large $H \subseteq G$ is \thick{n} in $A$ for every $A \subseteq G$ and $n \ge 0$.
\end{cor}

\begin{remark}\label{st:respecting_diff_compatible_with_C-infty}
	For a filter $\FC$ on $G$, respecting differentiability implies in particular that every $\FC$-large set is $\w$-differentiable $\mmod{\FC}$, i.e. $\FC \subseteq C^\w_\FC$. In other words, $\FC$ is compatible with the largeness notion $C^\w_\FC$, turning $(G, C^\w_\FC, \FC)$ into a filtered \LL-semigroup.
\end{remark}

\subsection{Respecting $\De$-sets}

Again fix an ambient filter $\FC$ on $G$. Just like it was natural to require $\FC$ respect differentiability, it is also natural to have $\FC$ respect the role that $\De_\FC(A)$ plays for $A$, i.e. ensure that \namedthmref{Main Property of $\De$-sets}{namedthm:main_property_of_De-sets} is stable under $\FC$-small perturbations. This amounts to the following two conditions:
\begin{enumerate}[(i)]
	\item For every set $A \subseteq G$ and $D \subseteq G$ with $D \simF \De_\FC(A)$, $D$ is \Dthick{n} for $A$.
	
	\item For $D \subseteq G$ and $A,\tilA \subseteq G$ with $A \simF \tilA$, $D$ is \Dthick{n} for $A$ if and only if $D$ is \Dthick{n} for $\tilA$.
\end{enumerate}

Note that since being \Dthick{n} is closed under supersets, condition (i) amounts to $\De_\FC(A) \cap H$ being \Dthick{n} for $A$ for every $\FC$-large $H$, which is the same as $H$ itself being \Dthick{n} for $A$.

As for condition (ii), below, we will need it only for $\FC$-large sets and their derivatives, so instead of demanding it for all sets $D$, we define the following strengthening of \Dthick{n}ness and require all $\FC$-large sets to satisfy it.

\begin{defn}\label{defn:absolute-Delta-thickness}
	Let $A, D \subseteq G$ and $n \in \N$. For $n \ge 1$, we say that $D$ is \emph{absolutely \Dthick{n} for $A$} if for any $n$-differentiable ($\mmod{\FC}$) $\tilA \simF A$, $\exists^\FC g \in D$ such that
	\begin{enumerate}[(i)]
		\item $\del_g \tilA$ is $(n-1)$-differentiable;
		
		\item $\del_g D$ is absolutely \Dthick{n-1} for $\del_g \tilA$.
	\end{enumerate}
	For $n=0$, we always say that $D$ is \emph{absolutely \Dthick{0} for $A$}.
\end{defn}

As with \Dthick{n}ness, it follows by an easy induction on $n$ that being absolutely \Dthick{n} is closed upward, i.e. if $D' \supseteq D$, $A' \supseteq A$, and $D$ is absolutely \Dthick{n} for $A$, then $D'$ is absolutely \Dthick{n} for $A'$.

\begin{prop}\label{st:absolute_Dthickness_for_almost_invariant_filters}
	For an almost invariant filter $\FC$ on $G$, every $\FC$-large set is absolutely \Dthick{n} for $A$ for every $A \subseteq G$ and $n \ge 0$.
\end{prop}
\begin{proof}
	The same as in \examplesref{examples_De-thick_sets}{example_invariant_filter_thick}.
\end{proof}

\begin{lemma}
	For the filter $\IPstar$ on $G$, every $\IP$-positive set $A$ is absolutely \Dthick{n} for $A$ for every $n \ge 0$.
\end{lemma}
\begin{proof}
	We prove by induction on $n$, and since $n=0$ is automatic, we assume that it is true for $k < n$ and prove for $n$. Let $\tilA \simF A$, so $\tilA \cap A$ is also $\IPstar$-positive and hence contains an \IP-set $P$. By \cref{lemma_IP_A_is_in_Delta(A)}, $P \subseteq \De(P) \subseteq \De(\tilA)$, so for any $g \in P$, $\del_g P$ is positive, and hence $(n-1)$-differentiable. By induction on $n$, $\del_g P$ is absolutely \Dthick{n-1} for $\del_g P$, and hence, by upward closure, $\del_g A$ is absolutely \Dthick{n-1} for $\del_g \tilA$.
\end{proof}

\begin{cor}\label{st:absolute_Dthickness_for_IPstar}
	For the filter $\IPstar$ on $G$, every $\IPstar$-large set $H$ is absolutely \Dthick{n} for $A$ for every $A \subseteq G$ and $n \ge 0$.
\end{cor}
\begin{proof}
	The set $H \cap A$ is $\IPstar$-positive, so by the previous lemma, it is absolutely \Dthick{n} for $H \cap A$, and thus, by upward closure, $H$ is absolutely \Dthick{n} for $A$.
\end{proof}

\subsection{$\del$-filters}

Finally, we define the class of filters, which respect all of the notions involved in differentiation.

\begin{defn}
A filter $\FC$ on a semigroup $G$ is called a \emph{$\del$-filter} (read \emph{del-filter}) if
\begin{enumerate}[(i)]
	\item $\FC$ respects differentiability (equivalently, every $\FC$-large $H$ is \thick{n} in $A$, for every $A \subseteq G$ and $n \ge 0$);

	\item Every $\FC$-large $H$ is absolutely \Dthick{n} for $A$, for every $A \subseteq G$ and $n \ge 0$.
\end{enumerate}
\end{defn}

By \cref{example:respecting_diff_for_almost_invariant_filters,example:respecting_diff_for_IPstar} together with \cref{st:absolute_Dthickness_for_almost_invariant_filters,st:absolute_Dthickness_for_IPstar}, all almost invariant filters, as well as the filter $\IPstar$, are $\del$-filters.

\section{The main results}

Throughout this section, all of the notions of differentiability will be $\mmod{\FC}$, for a filter $\FC$ under consideration. 

\subsection{A difference-Ramsey theorem for $\del$-filters}

We are now ready to state and prove our main theorem. Recall that for a filter $\FC$ on $G$ that respects differentiability, the triple $(G, C^\w_\FC, \FC)$ is a filtered \LL-semigroup.

\begin{theorem}[Difference-Ramsey for $\del$-filters]\label{st:difference-Ramsey_for_del-filters}
For a $\del$-filter $\FC$ on $G$, the filtered \LL-semigroup $(G, C^\w_\FC, \FC)$ has the difference-Ramsey property, i.e. for any binary relation $E \subseteq G^2$, if
$
\forall^\FC h \forall^\FC g \ E(g,gh),
$
then for any $A \in C^\w_\FC$ and $n \in \N$, there is a sequence $(g_i)_{i \le n} \subseteq A$ with
$
\forall i<j \ E(g_j, g_i).
$
\end{theorem}
\begin{proof}
First let's introduce some notation: for $\al, \be \subseteq \m := \set{0,1,...,m-1}$, we write $\be < \al$ if $\max(\be) < \min(\al)$ or one of $\al,\be$ is $\0$. For $h,h' \in G$, put
$
E(\cdot h, \cdot h') := \set{g \in G : E(gh,gh')},
$
and also 
$
E(\cdot, \cdot h) := \set{g \in G : E(g, gh)},
$
so the set 
$$
H := \set{h \in G : E(\cdot, \cdot h) \text{ is $\FC$-large}}
$$
is $\FC$-large by the hypothesis.

To prove this theorem, we will construct sequences $(h_m)_{m < n}$ of elements in $H$ and $(A_m)_{m \le n}$ of subsets of $A$ such that for every $m \le n$, we have
\begin{enumerate}[($m$.1)]
\item $A_m \subseteq \del_{h_{m-1}} ... \del_{h_1} \del_{h_0} A$ is $(n-m)$-differentiable;
\item for all $\al,\be \subseteq \m$ with $\0 \ne \be < \al$, $A_m \subseteq E(\cdot h_\al, \cdot h_\al h_\be)$;
\item for all $\al \subseteq \m$, $h_\al \in H$;
\item $H_m := \del_{h_{m-1}} ... \del_{h_1} \del_{h_0} H$ is absolutely \Dthick{n-m} for $A_m$.
\end{enumerate}

Granted such a sequence, we define the desired sequence $(g_i)_{i \le n}$ as follows: by ($n$.1) $A_n \ne \0$, so take $g \in A_n$ and for each $i \le n$, put $g_i = g h_{\al_i}$, where $\al_i = \set{n-1,n-2,...,i}$. Because $g \in A_n \subseteq \bigcap_{\al \subseteq \m} A h_\al^{-1}$, $g_i \in A$ for each $i \le n$. Also, for $i < j \le n$, taking $\al := \al_j = \set{n-1,n-2,...,j}$ and $\be := \al_i \setminus \al_j = \set{j-1,j-2,...,i}$ in ($n$.2), we get $E(g h_\al, g h_\al h_\be)$ and thus $E(g_j, g_i)$.

\medskip

Now we show how to recursively define the sequences $(h_m)_{m < n}$ and $(A_m)_{m \le n}$. For $m=0$, put $A_0 = A$ so it is $n$-differentiable, $H_0 = H$ is absolutely \Dthick{n} for $A_0$ because $\FC$ is a $\del$-filter, and ($0$.2)--($0$.3) are vacuous.

Now suppose that for $m<n$, the sequences $(h_k)_{k < m}$ and $(A_k)_{k \le m}$ are defined and satisfy conditions ($m$.1)--($m$.4). By ($m$.4), $\exists^\FC h_m \in H_m$ such that $\del_{h_m} A_m$ is $(n-m-1)$-differentiable and $H_{m+1} = \del_{h_m} H_m$ is absolutely \Dthick{n-m-1} for $\del_{h_m} A_m$. Because $h_m \in H_m$, $h_m h_\al \in H$ for all $\al \subseteq \m$, so ($m+1$.3) holds. In particular, for all $\al \subseteq \m$, $E(\cdot, \cdot h_m h_\al)$ is $\FC$-large, so the set
$$
A_{m+1} := \del_{h_m} A_m \cap \bigcap_{\al \subseteq \m} E(\cdot, \cdot h_m h_\al)
$$
is still $(n-m-1)$-differentiable and $H_{m+1}$ is still absolutely \Dthick{n-m-1} for $A_{m+1}$. Thus, ($m+1$.1) and ($m+1$.4) are verified, and it remains to check ($m+1$.2). To this end, fix $\al,\be \subseteq \cl{m+1}$ with $\0 \ne \be < \al$. Note that we only need to check the case when $m \in \be$ or $m \in \al$. If $m \in \be$, then $\al = \0$ and $h_\be = h_m h_{\be'}$ for some $\be' \subseteq \m$. Thus, it follows by the very choice of $A_{m+1}$ that $A_{m+1} \subseteq E(\cdot, \cdot h_m h_{\be'})$. On the other hand, if $m \in \al$, then $h_\al = h_m h_{\al'}$ for some $\al' \subseteq \m$, so $A_{m+1} \subseteq \del_{h_m} A_m$ and ($m$.2) imply
$$
A_{m+1} \subseteq A_m h_m^{-1} \subseteq E(\cdot h_{\al'}, \cdot h_{\al'} h_\be) h_m^{-1} = E(\cdot h_m h_{\al'}, \cdot h_m h_{\al'} h_\be) = E(\cdot h_\al, \cdot h_\al h_\be).
$$
\end{proof}

\subsection{A van der Corput lemma for $\del$-filters}

To make it convenient to state the van der Corput property below, for a filter $\FC$ on a semigroup $G$, we put
$$
\MeasF := \dualF \uplus C^\w_\FC.
$$

The last theorem, together with \cref{st:diff-Ramsey=>vdC}, immediately gives:

\begin{cor}[van der Corput lemma for $\del$-filters]\label{st:vdC_for_del-filters}
	For a $\del$-filter $\FC$ on a semigroup $G$, the filtered \M-semigroup $(G, \MeasF, \FC)$ has the van der Corput property, i.e. for every weakly upper $\MeasF$-semimeasurable bounded sequence $(e_g)_{g \in G}$ in a Hilbert space $\Hil$, we have
	$$
	\lim_{h \to \FC} \lim_{g \to \FC} \gen{e_g, e_{gh}} = 0 \implies \lim_{g \to \FC} \gen{f,e_g} = 0, \ \forall f \in \Hil.
	$$
\end{cor}

Note that the van der Corput property is hereditary with respect to the measurability restriction given by $\PC$, i.e. if a filtered \M-semigroup $(G, \PC, \FC)$ has it and $\PC \supseteq \PC'$, then $(G, \PC', \FC)$ also has it. With this in mind, we now explicitly list some previously known concrete instances of \cref{st:vdC_for_del-filters}.

\begin{instances}\label{instances:vdC_for_del-filters}
	\item\label{instance:vdC_for_almost_invariant} \emph{For any almost invariant filter $\FC$ on a semigroup $G$, the filtered \M-semigroup $(G, \AC_\FC, \FC)$ has the van der Corput property.} (Recall that $\AC_\FC = \dualF \uplus \FC$.) Indeed, almost invariant filters are $\del$-filters, so by the above corollary, $(G, \MeasF, \FC)$ has the van der Corput property, and hence so does $(G, \AC_\FC, \FC)$ because $\AC_\FC \subseteq \MeasF$ by \examplesref{examples_differentiable_sets}{example_differentiability_for_invariant_filter}. \emph{Note that in case $G$ is a group or $G = \N$, this includes the \Frechet filter on $G$.}
	
	\item(Bergelson--McCutcheon \cite{Berg-McCutch}*{Theorem 2.3}) \emph{For any idempotent ultrafilter $p$ on a semigroup $G$, the filtered \M-semigroup $(G, \Pow(G), p)$ has the van der Corput property.} This is a special case of the previous example because for any ultrafilter $\FC$, $\AC_\FC = \Pow(G)$.
	
	\item(Furstenberg \cite{Furst_book}*{Lemma 9.24}) \emph{The filtered \M-semigroup $(G, \Pow(G), \IPstar)$ has the van der Corput property.} Indeed, $\IPstar$ is a $\del$-filter and, by \examplesref{examples_differentiable_sets}{example_differentiability_for_IP_filter}, $\MeasF[\IPstar] = \Pow(G)$.
\end{instances}

\begin{remark}\label{remark:generalizing_Bessel_for_Frechet}
	When $G$ is a group or $G = \N$, the \Frechet filter $\FC$ on $G$ is invariant, so it is included in \instanceref{instances:vdC_for_del-filters}{instance:vdC_for_almost_invariant}. This can be viewed as a generalization of \cref{st:Bessel} because every bounded sequence $(e_g)_{g \in G} \subseteq \Hil$ of pairwise orthogonal vectors is weakly upper $\AC_\FC$-semimeasurable. However, to verify this last fact, we use \cref{st:Bessel} itself, so this generalization is somewhat tautological.
\end{remark}

The next thing we will do is define a natural subclass of almost invariant filters (hence $\del$-filters), for which $\MeasF$ is rich (typically the powerset of $G$). What is somewhat remarkable is that the richness of $\MeasF$ will be proven using, again, the difference-Ramsey theorem for $\del$-filters.

\section{\D-measures}

In this section, we define a notion of measure that generalizes invariant finitely additive probability measures and idempotent ultrafilters, as well as subadditive notions such as the notion of upper density for subsets of amenable groups. We study the differentiability of sets of positive measure and obtain a van der Corput lemma for these measures from \cref{st:vdC_for_del-filters}.

\subsection{Definitions and examples}

\begin{defn}
Let $S$ be a set and $\AC \subseteq \Pow(S)$ be an algebra. A \emph{finitely subadditive probability measure} on $\AC$ is a function $\mu : \AC \to [0,1]$ such that
\begin{enumerate}[(i)]
\item $\mu(\0) = 0, \mu(S) = 1$;
\item for $A,B \in \AC$, $A \subseteq B$ implies $\mu(A) \le \mu(B)$;
\item for $A,B \in \AC$, $\mu(A \cup B) \le \mu(A) + \mu(B)$.
\end{enumerate}
\end{defn}

For $(S,\AC,\mu)$ as above, we fix the following terminology. Call a set $B \subseteq S$ \emph{$\mu$-null} if $B \subseteq A$ for some $A \in \AC$ with $\mu(A) = 0$; denote by $\SC_\mu$ the collection of $\mu$-null sets. Similarly, call a set $B \subseteq S$ \emph{$\mu$-positive} if $B \supseteq A$ for some $A \in \AC$ with $\mu(A) > 0$; let $\LC_\mu$ denote the collection of $\mu$-positive sets and put $\AC_\mu := \SC_\mu \uplus \LC_\mu$. Clearly, $\AC_\mu \supseteq \AC$ and one can think of it as a completion of $\AC$ to a collection that enjoys the following dichotomy: each set in it is either small or it contains a structurally large set.

Furthermore, we call a set $C \subseteq S$ \emph{$\mu$-conull} if its complement is $\mu$-null. Note that $\mu$-null sets form an ideal and hence $\mu$-conull sets form a filter, which we denote by $\FC_\mu$. Also note that $\FC_\mu$ is compatible with the largeness notion $\LC_\mu$, so $(S, \LC_\mu, \FC_\mu)$ is a filtered \LL-space.

\smallskip

We now isolate a relevant class of finitely subadditive measures on a semigroup $G$. Below, we call $\PC \subseteq \Pow(G)$ \emph{invariant} if
$
\PC G^{-1} := \set{A g^{-1} : A \in \PC, g \in G} \subseteq \PC.
$

\begin{defn}
Let $\AC \subseteq \Pow(G)$ be an invariant algebra and $\mu$ be a finitely subadditive measure on $\AC$. We say that $\mu$ is \emph{almost invariant} if
\begin{enumerate}[(i)]
\setcounter{enumi}{3}
\item for every $A \in \AC$, 
$$
\forall^{\FC_\mu} g \; \mu(A g^{-1}) = \mu(A).
$$
\end{enumerate}
Furthermore, we say that $\mu$ is \emph{additive on translates} if
\begin{enumerate}[(i)]
\setcounter{enumi}{4}
\item for every $A \in \AC$ and $g_0,g_1,...,g_{n-1} \in G$,
$$
\mu(\bigcup_{i<n} A g_i^{-1}) \ge \sum_{i<n} \mu(A g_i^{-1}) - \sum_{i < j < n} \mu(A g_i^{-1} \cap A g_j^{-1}).
$$
\end{enumerate}
If $\mu$ satisfies both (iv) and (v), we call it a \emph{\D-measure}\footnote{Here, ``D'' stands for density as \D-measures can be viewed as generalizations of upper density on amenable groups.} and refer to $(G,\AC,\mu)$ as a \emph{\D-measured semigroup}.
\end{defn}

Before proceeding with examples, we record the following simple observations.

\begin{prop}
	Let $\mu$ be a finitely subadditive probability measure on an invariant algebra $\AC \subseteq \Pow(G)$.
	
	\begin{enumerate}[\textup{(\alph{enumi})}]
		\item If $\mu$ is almost invariant, then so is $\FC_\mu$, and hence $\FC_\mu$ is a $\del$-filter.
		
		\item If $\mu$ is additive on translates, then it is genuinely additive on almost disjoint translates, that is: for every $A \in \AC$ and $g_0,g_1,...,g_{n-1} \in G$, if $A g_i^{-1} \cap A g_j^{-1}$ is $\mu$-null whenever $i \ne j$, then
			$$
			\mu(\bigcup_{i<n} A g_i^{-1}) = \sum_{i<n} \mu(A g_i^{-1}).
			$$
	\end{enumerate}
\end{prop}

\begin{examples}\label{examples:D-measures}
\item Idempotent ultrafilters, or more precisely, the $\set{0,1}$-measures associated to idempotent ultrafilters, are examples of finitely additive \D-measures on a semigroup $G$ with $\AC = \Pow(G)$.

\item More generally, for any almost invariant filter $\FC$, the associated $\set{0,1}$-measure on $\AC_\FC = \dualF \cup \FC$ is a \D-measure. This includes, in particular, the \Frechet filter on any group.

\item\label{example:comeager_in_Baire_group} A notable instance of the previous example is the filter $\CC$ of comeager sets on a Baire topological group. In this case, $\AC$ is actually a $\si$-algebra, being the disjoint union of the collections of meager and comeager sets.

\item For a countable amenable group $G$ with a \Folner sequence $(F_n)_{n \in \N}$, we define the \emph{upper density function along $(F_n)_{n \in \N}$} by letting
$$
\updens(A) = \limsup_{n \to \w} {|A \cap F_n| \over |F_n|},
$$
for all sets $A \subseteq G$. The function $\updens$ is a fully invariant (but not additive) \D-measure on the powerset $\AC = \Pow(G)$. The condition of additivity on translates holds simply because the subsequence that achieves the $\limsup$ for $A$ also achieves it for all of its translates. We refer to the associated filter $\FC_{\updens}$ as \emph{the density filter along $(F_n)_{n \in \N}$}.

\item\label{example:conull_in_amenable_group} Every locally compact Hausdorff amenable group $G$, by definition, admits a finitely additive invariant probability measure on the $\si$-algebra $\AC$ of Haar measurable sets. In particular, if $G$ is countable, then $\AC = \Pow(G)$.

\item\label{example:conull_in_compact_group} Every compact Hausdorff group $G$ admits a countably additive invariant probability measure $\mu$, namely, the normalized Haar measure defined on the Borel $\sigma$-algebra of $G$. Thus, $(G, \AC, \mu)$ is, in particular, a \D-measured group, where $\AC$ is the $\si$-algebra of $\mu$-measurable sets.

\item\label{example:conull_in_ultraproduct} Let $(G_n, \AC_n, \mu_n)_{n \in \N}$ be a sequence of groups with finitely additive invariant probability measures $\mu_n$, and let $G$ be their ultraproduct. Then $G$ admits a \emph{countably additive} invariant probability measure $\mu$, called the \emph{Loeb measure}, defined on the $\sigma$-algebra $\AC$ generated by the so-called internal sets.

\item\label{example:conull_in_probability_group} Generalizing the last two examples, every probability group (see Definition 1 in \cite{me_prob_groups}), by definition, admits a countably additive invariant probability measure defined some $\sigma$-algebra $\AC$.
\end{examples}

\subsection{\D-measures and differentiability}

\begin{prop}[Quantitative differentiability for \D-measures]
Let $(G,\AC,\mu)$ be a \D-measured semigroup. Then for every $\mu$-positive set $A \in \AC$, $\exists^{\FC_\mu} h \ \mu(\del_h A) \ge {\mu(A)^2 \over 3}$.
\end{prop}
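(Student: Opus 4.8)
The plan is to establish this as a van der Corput-type $L^2$ estimate, exploiting property (v) (additivity on translates) together with the fact that, by property (iv), the ``stationary'' set where $\mu(Ag^{-1}) = \mu(A)$ is $\mu$-conull. First I would set $\alpha := \mu(A) > 0$ and, for a finite tuple $(g_i)_{i<n}$ of elements of $G$, consider the counting function $f := \sum_{i<n} \mathbf{1}_{Ag_i^{-1}}$, or rather argue with $\mu$ directly since $\mu$ need not be additive. The idea is the standard Cauchy--Schwarz / second-moment argument: if $\del_{g_j g_i^{-1}} A$ (equivalently $Ag_i^{-1} \cap Ag_j^{-1}$, after a harmless shift) were $\mu$-null for too many pairs $i<j$, then the translates $Ag_i^{-1}$ would be ``essentially disjoint,'' and (v) would force $\mu\big(\bigcup_{i<n} Ag_i^{-1}\big) = \sum_{i<n} \mu(Ag_i^{-1})$, which for $i$ ranging over a $\mu$-conull stationary set equals $n\alpha$, exceeding $1$ once $n > 1/\alpha$ — a contradiction. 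So among any sufficiently long tuple there must be a pair of translates with positive-measure overlap.

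Concretely, I would pick $n$ with $n\alpha > 1$ (say $n = \lceil 1/\alpha \rceil + 1$, or just work with $n=3$ after replacing $\alpha$ by $\alpha$ and noting $3\alpha^2/3 = \alpha^2$...); actually the cleanest route is to fix $n = 3$ is too small in general, so instead: take any $n$ elements $g_0,\dots,g_{n-1}$ from the $\mu$-conull set $S := \set{g : \mu(Ag^{-1}) = \alpha}$ (nonempty since $\mu$-conull sets are $\F_\mu$-large, hence nonempty). If all pairwise overlaps $Ag_i^{-1}\cap Ag_j^{-1}$ were $\mu$-null, (v) gives $1 \ge \mu(\bigcup_i Ag_i^{-1}) = n\alpha$, impossible for $n > 1/\alpha$. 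Hence some pair has $\mu(Ag_i^{-1}\cap Ag_j^{-1}) > 0$; setting $h := g_j g_i^{-1}$ (arranging indices so this makes sense, or translating by $g_i^{-1}$ on the right: $Ag_i^{-1}\cap Ag_j^{-1} = (A \cap A(g_jg_i^{-1})^{-1})g_i^{-1} = (\del_h A)g_i^{-1}$) and using (iv) to transfer measure across the translate by $g_i$, we get $\mu(\del_h A) > 0$ for \emph{some} $h$. To upgrade ``some $h$'' to ``$\exists^\mu h$'' with the quantitative bound $\alpha^2/3$, I would instead run an averaging argument: for a random-like choice over a large index set, the expected number of overlapping pairs is controlled below, forcing many pairs, and then a pigeonhole over the ``difference'' directions $h = g_jg_i^{-1}$ yields a $\mu$-positive set of such $h$ with $\mu(\del_h A)$ bounded below — the constant $1/3$ coming from the inequality $\binom{n}{2}^{-1}\big((\,\sum_i \mu(Ag_i^{-1})\,)^2 - \sum_i \mu(Ag_i^{-1})\big) \ge$ something, optimized over $n$, which gives a factor like $\alpha^2/2$ or $\alpha^2/3$ after rounding $n$ to an integer.

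I expect the main obstacle to be precisely this last upgrade: getting an $\F_\mu$-\emph{positive} set of good directions $h$ (not just one $h$) together with a clean numerical constant, while only having finite \emph{subadditivity} in general and additivity merely on $\mu$-almost-disjoint translates. The subtlety is that $\mu$ is not additive, so I cannot literally compute $\int f^2$; I must phrase the second-moment estimate entirely in terms of unions and the restricted additivity (v), which only fires when the translates are \emph{exactly} $\mu$-almost disjoint, not ``small overlap.'' The standard fix is a dichotomy iterated finitely often, or to work with the conull stationary set $S$ and argue: the map $(i,j) \mapsto g_jg_i^{-1}$ from pairs with positive overlap into $G$ cannot have $\mu$-small image, else one could thin out $\set{g_i}$ to a genuinely $\mu$-almost-disjoint family violating (v); quantifying ``cannot be too small'' against $n\alpha > 1$ and optimizing $n$ produces the $\alpha^2/3$. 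I would also double-check that all sets involved ($Ag_i^{-1}$, their finite unions and intersections, $\del_h A$) lie in the algebra $\A$, which follows from $\A G^{-1} \subseteq \A$ in (iv) and $\A$ being an algebra.
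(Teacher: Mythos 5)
Your proposal correctly identifies the second-moment skeleton (pairwise small overlaps of the translates $Ag_i^{-1}$ would contradict $n\mu(A)>1$ via additivity on translates), but it has a genuine gap at exactly the step you flag as ``the main obstacle,'' and that step is not a technicality to be patched by averaging --- it is the $\De$-Ramsey theorem (\cref{thm:Delta-Ramsey}), i.e.\ the main result of the paper, of which this proposition is intended as an application. The paper argues by contradiction: if $\forall^{\F_\mu} h\ \mu(\del_h A)<\e$ with $\e=\mu(A)^2/3$, then since $(\del_h A)g^{-1}=Ag^{-1}\cap A(gh)^{-1}$ and $\mu$ is almost invariant, the relation $R(g_1,g_2):\shortiff \mu(Ag_1^{-1}\cap Ag_2^{-1})<\e$ satisfies the \emph{difference} hypothesis $\forall h\,\forall^{\F_\mu} g\ R(g,gh)$; the $\De$-Ramsey theorem then produces $n=\lceil 3/\mu(A)\rceil$ elements of $\St(A)$ that form an actual $R$-clique, and the Bonferroni-type estimate $\mu(\bigcup_i Ag_i^{-1})\ge\sum_i\mu(Ag_i^{-1})-\sum_{i<j}\mu(Ag_i^{-1}\cap Ag_j^{-1})$ yields a measure exceeding $1$. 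Your proposal instead picks \emph{arbitrary} elements of the conull stationary set $S$ and hopes to control their pairwise overlaps; but the (contradiction) hypothesis only controls $\mu(\del_h A)$ for conull many $h$, and transferring this to a pair $(g_i,g_j)$ requires both that $g_j=g_ih$ for some $h$ in the good set and that $\mu((\del_h A)g_i^{-1})=\mu(\del_h A)$, which almost invariance guarantees only for $\F_\mu$-almost all $g_i$ \emph{depending on} $h$. Satisfying all $\binom{n}{2}$ such constraints simultaneously is precisely the Ramsey-theoretic content you would have to prove; ``thinning out to a genuinely almost-disjoint family'' is a restatement of the problem, not a solution.

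Two further points. First, $G$ is only a semigroup, so $h:=g_jg_i^{-1}$ is not defined; the correct move is to build the $g_i$ as $g\,h_{\al_i}$ from a controlled product system so that every pair has the form $(g',g'h)$ by construction --- again, this is what the proof of \cref{thm:Delta-Ramsey} does. Second, your qualitative dichotomy (``all overlaps null $\Rightarrow$ contradiction, hence some pair has positive overlap'') at best produces \emph{one} direction $h$ with $\mu(\del_h A)>0$, whereas the statement asserts an $\F_\mu$-\emph{positive} set of $h$ with the bound $\mu(A)^2/3$; your sketch of the upgrade via pigeonhole over differences is not carried out and, as written, would not survive the failure of genuine additivity. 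The contradiction framing of the paper obtains the positive set of directions for free (negating $\exists^\mu$ gives $\forall^{\F_\mu}$ of the opposite), which is the clean way to handle exactly the difficulty you anticipated.
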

\begin{proof}
Put $\FC = \FC_\mu$, $\e = {\mu(A)^2 \over 3}$, and assume for contradiction that $\forall^\FC h \in G$, $\mu(\del_h A) < \e$. Thus, for every such $h \in G$, we also have $\forall^\FC g \in G \ \mu((\del_h A)g^{-1}) < \e$, by the almost invariance of $\mu$. Because $(\del_h A)g^{-1} = A g^{-1} \cap A (gh)^{-1}$, we get
$$
\forall^\FC h \forall^\FC g \ \mu(A g^{-1} \cap A (gh)^{-1}) < \e.
$$
Hence, defining
$$
E(g_1,g_2) :\shortiff \mu(A g_1^{-1} \cap A g_2^{-1}) < \e,
$$
for $g_1,g_2 \in G$, we have
$
\forall^\FC h \forall^\FC g \ E(g, gh).
$
Because $\mu$ is almost invariant, the set 
$$
\St_\mu(A) := \set{g \in G : \mu(A g^{-1}) = \mu(A)}
$$ 
is $\FC$-large and hence $\w$-differentiable $\mmod{\FC}$ because $\FC$ is almost invariant (see \examplesref{examples_differentiable_sets}{example_differentiability_for_invariant_filter}). Choosing $n = \left\lceil {3 \over \mu(A)}\right\rceil$, apply the difference-Ramsey property (\cref{st:difference-Ramsey_for_del-filters}) to $E$ and $\St_\mu(A)$, and get a sequence $(g_i)_{i<n} \subseteq \St_\mu(A)$ such that $\mu(A g_i^{-1} \cap A g_j^{-1}) < \e$ for all $i<j$. But then, by additivity on translates, we have
\begin{align*}
\mu(\bigcup_{i<n} A g_i^{-1}) &\ge \sum_{i<n} \mu(A g_i^{-1}) - \sum_{i<j<n} \mu(A g_i^{-1} \cap A g_j^{-1}) \\
&> n \mu(A) - {1 \over 2}n(n-1) \e \\
&\ge {3 \over \mu(A)} \mu(A) - {1 \over 2}\left({3 \over \mu(A)} + 1\right){3 \over \mu(A)} {\mu(A)^2 \over 3} \\
&= 3 - {3 + \mu(A) \over 2} \ge 3 - {3 + 1 \over 2} = 3 - 2 = 1,
\end{align*}
a contradiction.
\end{proof}

Recalling the definition of $n$-differentiability over $(\LC, \LC')$ (\cref{defn:differentiation}), we get the following.

\begin{cor}[Qualitative differentiability for \D-measures]\label{st:D-positive-sets-w-differentiable}
For any \D-measured semigroup $(G, \AC, \mu)$, every $\mu$-positive set is $\w$-differentiable over $(\LC_\mu, \Pos[\FC_\mu])$, and hence, $\LC_\mu = C^\w(\LC_\mu, \Pos[\FC_\mu]) \subseteq C^\w_{\FC_\mu}$. In particular, $\AC_\mu \subseteq \MeasF[\FC_\mu]$.
\end{cor}
\begin{proof}
The previous proposition implies that every set in $\LC_\mu$ is $1$-differentiable over $(\LC_\mu, \Pos[\FC_\mu])$, so, by \cref{st:self-induction}, $\LC_\mu = C^\w(\LC_\mu, \Pos[\FC_\mu])$. Because $\LC_\mu \subseteq \Pos[\FC_\mu]$, \cref{st:diff_is_hereditary-in-L} implies that $C^\w(\LC_\mu, \Pos[\FC_\mu]) \subseteq C^\w_{\FC_\mu}$.
\end{proof}

\subsection{A van der Corput lemma for \D-measures}

\cref{st:D-positive-sets-w-differentiable,st:vdC_for_del-filters} yield the following:

\begin{cor}[van der Corput lemma for \D-measures]\label{st:vdC_for_D-measures}
	For any \D-measured semigroup $(G, \AC, \mu)$, the filtered \M-semigroup $(G, \AC_\mu, \FC_\mu)$ has the van der Corput property, i.e. for every weakly upper $\AC_\mu$-semimeasurable bounded sequence $(e_g)_{g \in G}$ in a Hilbert space $\Hil$, we have
	$$
	\lim_{h \to \FC_\mu} \lim_{g \to \FC_\mu} \gen{e_g, e_{gh}} = 0 \implies \lim_{g \to \FC_\mu} \gen{f,e_g} = 0, \ \forall f \in \Hil.
	$$
\end{cor}

\begin{remark}
	If $\AC$ is a $\si$-algebra (as in \eqref{example:comeager_in_Baire_group}, \eqref{example:conull_in_amenable_group}--\eqref{example:conull_in_probability_group} of Examples \ref{examples:D-measures}), we recall from \cref{remark:equivalence_of_measurability_for_si-algebras} that weakly $\AC$-measurable sequences (in the classical sense) are weakly upper $\AC$-semimeasurable, and hence also weakly upper $\AC_\mu$-semimeasurable. Thus, the conclusion in \cref{st:vdC_for_D-measures} holds for all weakly $\AC$-measurable sequences.
\end{remark}

With this remark in mind, we see that the following particular instances of van der Corput lemmas that have appeared in the literature all follow as direct applications of \cref{st:vdC_for_D-measures}.

\begin{instances}
	\item(Furstenberg \cite{Furst_book}*{Lemma 4.9}) \emph{For the density filter (along some \Folner sequence) $\FC_\updens$ on a countable amenable group $G$, the filtered \M-semigroup $(G, \Pow(G), \FC_\updens)$ has the van der Corput property.}
	
	\item(By essentially the same proof as for the previous instance) \emph{For a locally compact Hausdorff amenable group $G$ and a finitely additive invariant probability measure $\mu$ defined on the $\si$-algebra $\AC$ of Haar measurable subsets of $G$, the filtered \M-semigroup $(G, \AC, \FC_\mu)$ has the van der Corput property.}
		
	\item (Folklore) \emph{For a compact Hausdorff group $G$ and the normalized Haar measure $\mu$ on $G$, the filtered \M-semigroup $(G, \AC, \FC_\mu)$ has the van der Corput property, where $\AC$ is the $\si$-algebra of $\mu$-measurable subsets of $G$.}
	
	\item (By the same proof as for the previous instance) \emph{For the Loeb measure $\mu$ on an ultraproduct $G$ of a sequence of groups $G_n$ equipped with a finitely additive invariant probability measure $\mu_n$, the filtered \M-semigroup $(G, \AC, \FC_\mu)$ has the van der Corput property, where $\AC$ is the $\si$-algebra of $\mu$-measurable subsets of $G$.}
	
	\item (A variant of \cite{me_prob_groups}*{Lemma 20}) Generalizing the previous two examples, \emph{for a probability group $(G, \BC, \mu)$ (as in Definition 1 of \cite{me_prob_groups}), the filtered \M-semigroup $(G, \AC, \FC_\mu)$ has the van der Corput property, where $\AC$ is the $\si$-algebra of $\mu$-measurable subsets of $G$.}
\end{instances}

This set of instances, together with Instances \ref{instances:vdC_for_del-filters}, completes the list of all previously existing van der Corput lemmas that the author is aware of.

\subsection{Further consequences}

In terms of derivation trees, \cref{st:D-positive-sets-w-differentiable}, together with parts \ref{item:self-induction=>w-branch} and \ref{item:infinite-branch=>IP-set} of Proposition \ref{prop:tree-properties}, implies the following:

\begin{cor}\label{cor:positive-delta-measure-infinite-branch}
For a \D-measured semigroup $(G,\AC,\mu)$ and $A \in \LC_\mu$, $\T[{\LC_\mu,\Pos[\FC_\mu]}]{A}$ is nonempty pruned. In particular, it has an infinite branch and hence $\De(A / \LC_\mu, \Pos[\FC_\mu])$ contains an \IP-set.
\end{cor}

This last corollary implies the following weak version of what would be a ``density Hindman theorem'' for \D-measures.

\begin{cor}
Let $(G,\AC,\mu)$ be a \D-measured semigroup. For every $\mu$-positive set $A \subseteq G$ there is an \IP-set $P = \FP(h_n)_{n \in \N}$ such that for every finite product subset $Q_N := \FP(h_n)_{n < N}$, $N \in \N$, we have
$$
\bigcap_{h \in Q_N} A h^{-1} >_{\FC_\mu} 0.
$$ 
In particular, $A$ contains shifts of arbitrarily long finite product sets; more precisely, for every $N$, there is $g \in A$ such that $A \supseteq g Q_N$.
\end{cor}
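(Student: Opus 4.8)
The plan is to read off the required \IP-set directly from an infinite branch of the derivation tree $\T_{\F_\mu}(A)$. Since $A$ is $\mu$-positive, Corollary~\ref{cor:positive-delta-measure-infinite-branch} gives $\depth(\T_{\F_\mu}(A)) = \w$, so $\T_{\F_\mu}(A)$ has an infinite branch; fix one, say $(h_n)_{n\in\N} \in G^\N$, and set $P := \FP(h_n)_{n\in\N}$.

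Next I would unwind what membership of the branch buys us. For each $N \in \N$ the initial segment $s_N := (h_0, h_1, \ldots, h_{N-1})$ lies in $\T_{\F_\mu}(A)$, so the defining condition of the derivation tree, applied with $t = s_N$, says exactly that the iterated derivative $\del_{s_N} A = \del_{h_{N-1}} \cdots \del_{h_1} \del_{h_0} A$ is $0$-recurrent, i.e. $\F_\mu$-positive. A straightforward induction on $N$ (this identity is also recorded in Proposition~\ref{prop_contains_FP}) shows
$$
\del_{h_{N-1}} \cdots \del_{h_1} \del_{h_0} A \;=\; \bigcap_{\al \subseteq \set{0,1,\ldots,N-1}} A h_\al^{-1} \;=\; \bigcap_{h \in Q} A h^{-1},
$$
where $Q := \FP(h_n)_{n<N}$ and we use $A h_\0^{-1} = A$. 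Combining, $\bigcap_{h \in Q} A h^{-1} >_{\F_\mu} 0$ for every $N$, which is precisely the asserted property of $P$; note this is strictly stronger than Proposition~\ref{prop:tree-properties}\eqref{item:infinite-branch}, which records only that each single translate $A h_\al^{-1}$ ($\al$ finite) is $\F_\mu$-positive. For the last clause, fix $N$: the set $\bigcap_{h \in Q} A h^{-1}$ is $\F_\mu$-positive, hence nonempty ($\0$ being $\F_\mu$-small), so any $g$ in it satisfies $gh \in A$ for all $h \in Q$, i.e. $gQ \subseteq A$, with $Q = \FP(h_n)_{n<N}$ a finite product set of length $N$.

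There is no real obstacle here: all the substance lies upstream, in Corollary~\ref{cor:positive-delta-measure-infinite-branch} (which rests on the quantitative recurrence estimate for $\De$-measures and, through it, on the $\De$-Ramsey theorem). The only points needing a little care are the bookkeeping turning ``$(h_n)_{n\in\N}$ is an infinite branch of $\T_{\F_\mu}(A)$'' into ``every finite-initial-segment derivative $\del_{s_N} A$ is $\F_\mu$-positive'', the identification $\del_{s_N} A = \bigcap_{h \in Q} A h^{-1}$, and the observation that it is leaflessness ($\depth(\T_{\F_\mu}(A)) = \w$), rather than merely $\Depth(\T_{\F_\mu}(A)) = \w$, that yields a genuine infinite branch, and hence an honest \IP-set rather than just arbitrarily long finite product sets.
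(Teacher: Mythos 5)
Your proof is correct and is essentially the argument the paper intends (the paper leaves it as a one-line remark before the statement): extract an infinite branch $(h_n)_{n\in\N}$ of $\T_{\F_\mu}(A)$ from its leaflessness, and read off positivity of $\bigcap_{h\in Q}Ah^{-1}=\del_{h_{N-1}}\cdots\del_{h_0}A$ from membership of each initial segment in the tree. Your parenthetical observations are also apt: it is $\depth(\T_{\F_\mu}(A))=\w$ rather than $\Depth$ that yields the branch, and the full intersection statement genuinely requires the tree membership of initial segments, not merely the containment $\FP(h_n)_{n\in\N}\subseteq\De_{\F_\mu}(A)$ recorded in Proposition \ref{prop:tree-properties}.
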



\bigskip

\begin{bibdiv}
	\begin{biblist}
		\bibselect{"\LatexDef/refs"}
	\end{biblist}
\end{bibdiv}
	
\end{document}